\theoremstyle{definition}
\newtheorem{theorem}{Theorem}[section]
\newtheorem{proposition}[theorem]{Proposition}
\newtheorem{lemma}[theorem]{Lemma}
\newtheorem{remark}[theorem]{Remark}
\numberwithin{equation}{section}
\numberwithin{table}{section}
\newcommand{\NN}{\mathbb{N}}
\newcommand{\ZZ}{\mathbb{Z}}
\newcommand{\QQ}{\mathbb{Q}}
\newcommand{\HH}{\mathbb{H}}
\newcommand{\CC}{\mathbb{C}}
\newcommand{\lcm}{\text{lcm}}
\renewcommand{\pmod}[1]{~(\text{mod }#1)}
\newcommand{\ord}{\text{ord}}
\title{Universal Sums of Triangular Numbers and Squares}
\author{Zichen Yang}
\address{Department of Mathematics, University of Hong Kong, Pokfulam, Hong Kong}
\email{zichenyang.math@gmail.com}
\date{\today}
\keywords{Sums of polygonal numbers, theta series, modular forms}
\subjclass[2020]{11E25, 11F27, 11F30}
\begin{document}

\begin{abstract}
In this paper, we study universal sums of triangular numbers and squares. Specifically, we prove that a sum of triangular numbers and squares is universal if and only if it represents $1,2,3,4,5,6,7,8,10,13,14,15,18,19,20,23,27,28,34,41,47$, and $48$.
\end{abstract}

\maketitle

\section{Introduction}

For an integer $m\geq3$, we define the \textit{generalized $m$-gonal number} as the function $P_m\colon\ZZ\to\ZZ$ given by
$$
P_m(x)\coloneqq\frac{(m-2)x^2-(m-4)x}{2},
$$
with $x\in\ZZ$. A \textit{sum of generalized polygonal numbers} is a function $F\colon\ZZ^r\to\ZZ$ of the form 
\begin{equation}
\label{eqn::polygonalsum}
F(x_1,\ldots,x_r)=a_1P_{m_1}(x_1)+\cdots+a_rP_{m_r}(x_r)
\end{equation}
with integers $1\leq a_1\leq\cdots\leq a_r$ and $m_1,\ldots,m_r\geq3$. We say that an integer $n\in\ZZ$ is \textit{represented} by a sum $F$ of generalized polygonal numbers, or alternatively a sum $F$ of generalized polygonal numbers \textit{represents} an integer $n\in\ZZ$, if there exist $x_1,\ldots,x_r\in\ZZ$ such that $n=F(x_1,\ldots,x_r)$. If a sum of generalized polygonal numbers represents every positive integer, we say that it is a \textit{universal} sum.

For any integer $\mathfrak{M}\geq1$, we denote by $\mathcal{C}_{\mathfrak{M}}$ the class consisting of sums of generalized polygonal numbers of the form (\ref{eqn::polygonalsum}) with parameters $m_1,\ldots,m_r\geq3$ such that $\lcm(m_1-2,\ldots,m_r-2)\leq\mathfrak{M}$. In \cite[Theorem 1.1]{kane2023finiteness}, Kane and the author showed that for any integer $\mathfrak{M}\geq1$, there exists a minimal constant $\Gamma_{\mathfrak{M}}>0$ such that every sum in $\mathcal{C}_{\mathfrak{M}}$ is universal if and only if it represents every positive integer up to the constant $\Gamma_{\mathfrak{M}}$. Moreover, for any real number $\varepsilon>0$, we have an asymptotic upper bound for the constant $\Gamma_{\mathfrak{M}}$ as follows,
\begin{equation}
\label{eqn::asymptoticgrowth}
\Gamma_{\mathfrak{M}}\ll_{\varepsilon}\mathfrak{M}^{43+\varepsilon}.
\end{equation}
As it was pointed out in \cite[Remark 1.2]{kane2023finiteness}, the implied constant in (\ref{eqn::asymptoticgrowth}) is ineffective, because universal ternary sums in $\mathcal{C}_{\mathfrak{M}}$ are not completely classified for any integer $\mathfrak{M}\geq3$. On the other hand, universal ternary sums in $\mathcal{C}_1$ and $\mathcal{C}_2$ were classified in \cite{bosma2013triangular,sun2007mixed,guo2007mixed,oh2009mixed} using various arithmetic methods. Therefore the constants $\Gamma_1$ and $\Gamma_2$ are effectively computable. In fact, Bosma and Kane showed that $\Gamma_1=8$ in \cite{bosma2013triangular} because the class $\mathcal{C}_1$ consists of sums of triangular numbers. The purpose of this paper is to show that $\Gamma_2=48$. 

Since the class $\mathcal{C}_2$ consists of sums of triangular numbers and squares, we establish the following finiteness theorem for such sums.

\begin{theorem}
\label{thm::finitenesstheorem34}
A sum of triangular numbers and squares is universal if and only if it represents every integer in the following set
\begin{equation}
\label{eqn::criticalinteger}
\{1,2,3,4,5,6,7,8,10,13,14,15,18,19,20,23,27,28,34,41,47,48\}.
\end{equation}
In particular, we have $\Gamma_2\leq48$.
\end{theorem}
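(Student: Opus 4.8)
The forward implication is immediate, so the content is the converse: every sum $F$ of triangular numbers and squares that represents all $22$ integers in \eqref{eqn::criticalinteger} is universal. The plan is a Bhargava-style escalation. I would build a rooted tree $\mathcal{T}$ whose nodes are sums of generalized triangular numbers $P_3$ and squares $P_4$ with nondecreasing coefficients: the root is the empty sum, and to a non-universal node $F$ with largest coefficient $a$ and \emph{truant} $t(F)$ (the least positive integer not represented by $F$) I attach the children $F+cP_m(x_{\mathrm{new}})$ for $m\in\{3,4\}$ and $a\le c\le t(F)$. Since $F+cP_m$ represents everything $F$ does and $c\le t(F)$, we get $t(F+cP_m)\ge t(F)\ge c$, so every non-universal node has a child and every leaf is universal; let $S(\mathcal{T})$ be the set of truants of the non-universal nodes.

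The decisive observation is purely combinatorial: if $\mathcal{T}$ is finite, then any sum $G$ of triangular numbers and squares representing every element of $S(\mathcal{T})$ is universal. Suppose not, and order the terms of $G$ with nondecreasing coefficients as $b_1Q_1,\dots,b_sQ_s$. Since $G$ represents $1\in S(\mathcal{T})$ and its summands are nonnegative, exactly one summand equals $1$, so $b_1=1$ and $G^{(1)}\coloneqq b_1Q_1$ is a depth-one node. Assuming $G^{(k)}\coloneqq b_1Q_1+\dots+b_kQ_k$ is a node with $k<s$, it is a sub-sum of $G$ and hence not universal (else $G$ would be), so $t\coloneqq t(G^{(k)})\in S(\mathcal{T})$ is represented by $G$ but not by $G^{(k)}$; in a representation of $t$ by $G$ some summand $b_jQ_j$ with $j>k$ is positive, whence $b_{k+1}\le b_j\le b_jQ_j(y_j)\le t$, so $b_k\le b_{k+1}\le t$ and $G^{(k+1)}$ is a child of $G^{(k)}$. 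Iterating to $k=s$ exhibits $G=G^{(s)}$ as a non-universal node of $\mathcal{T}$, whose truant then lies in $S(\mathcal{T})$ and is represented by $G$ — absurd. So it remains to (a) verify $\mathcal{T}$ is finite, (b) prove every leaf is universal, and (c) check that $S(\mathcal{T})$ is contained in \eqref{eqn::criticalinteger}, whose largest element $48$ then bounds $\Gamma_2$.

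For (a), finiteness follows a priori from \cite[Theorem 1.1]{kane2023finiteness} (truants of non-universal nodes are $\le\Gamma_2$, which forces the depth to be bounded) and a posteriori from the explicit escalation. Carrying out (b) and (c) is the bulk of the work: I would escalate node by node, continuing each branch until it reaches either a ternary sum, whose universality I read off from the complete classifications of universal mixed ternary sums in \cite{bosma2013triangular,sun2007mixed,guo2007mixed,oh2009mixed} (with a handful of borderline ternary cases treated by hand), or a sum in four or more variables which I prove universal directly. In the latter case, clearing denominators via $8P_3(x)+1=(2x+1)^2$ turns the question into whether a fixed positive-definite quadratic form in $\ge4$ variables represents every integer in a prescribed arithmetic progression; writing its theta series as Eisenstein plus cusp form, the Eisenstein coefficients on the eligible classes are $\gg_\varepsilon N^{1-\varepsilon}$ (from the local densities, which I check are positive for the relevant class), while the cuspidal contribution is of strictly lower order in $N$ (Deligne's bound $\ll_\varepsilon N^{1/2+\varepsilon}$ when the weight is $2$, a convexity estimate otherwise), so all large eligible integers are represented and a finite computation disposes of the rest. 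Tallying the truants produced along the finitely many branches yields $S(\mathcal{T})$ and confirms it is the set \eqref{eqn::criticalinteger}.

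The main obstacle is concentrated in two places. First, the escalation tree, though finite, is large, and ensuring that every branch is provably terminated — in particular cross-referencing each ternary node with the somewhat scattered literature on universal ternary sums of triangular numbers and squares, and treating the cases that literature does not state verbatim — demands careful bookkeeping. Second, the ternary layer cannot be bypassed: ternary theta series have weight $3/2$, where Deligne's bound is unavailable and genuine spinor-genus obstructions arise, so the universality of the ternary leaves rests essentially on the arithmetic classifications rather than on a soft analytic estimate; once a branch reaches four variables the weight is $\ge2$ and the analytic argument above is routine. Thus the real difficulty lies in organizing the tree and in the ternary classifications it invokes, not in the higher-rank analysis.
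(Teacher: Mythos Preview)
Your proposal is correct and follows essentially the same route as the paper: build the escalator tree, invoke the ternary classifications of \cite{bosma2013triangular,sun2007mixed,guo2007mixed,oh2009mixed} at depth $3$, and for depth $\ge 4$ split the theta series into Eisenstein plus cusp form and apply Deligne's bound, then read off the truant set. The only noteworthy differences are that the paper additionally uses arithmetic shortcuts (class-number-one genera and Oh's method from \cite{oh2011ternary}) for many depth-$4$ nodes before falling back on the analytic method---shortcuts it explicitly flags as optional---and that the paper's analytic lower bound (Proposition~\ref{thm::eisenstein4}) is stated only under the constraint $\ord_p(\mu n+\rho)\le 1$ for odd $p\mid N$, with the high-$p$-power residues handled by a separate reduction, a wrinkle your sketch suppresses but which is routine.
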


Moreover, we can show that every integer appearing in the set (\ref{eqn::criticalinteger}) is critical in the following sense.

\begin{theorem}
\label{thm::criticalinteger}
For any integer $t$ in the set (\ref{eqn::criticalinteger}), there exists a sum of triangular numbers and squares representing every positive integer except $t$. Therefore we have $\Gamma_2=48$.
\end{theorem}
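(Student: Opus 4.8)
The plan is to prove Theorem~\ref{thm::criticalinteger} constructively: for each of the twenty-two integers $t$ in the set~(\ref{eqn::criticalinteger}) I would exhibit an explicit sum $F_t$ of triangular numbers and squares that represents every positive integer except $t$. Granting this, the conclusion is immediate. The form $F_{48}$ represents $1,2,\dots,47$ but is not universal, so $\Gamma_2\ge48$; together with the bound $\Gamma_2\le48$ of Theorem~\ref{thm::finitenesstheorem34} this forces $\Gamma_2=48$. The forms $F_t$ for the remaining values of $t$ show in addition that no element can be deleted from~(\ref{eqn::criticalinteger}), since each such $F_t$ represents all the other critical integers yet fails to be universal.

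Fix $t$ and a candidate $F=F_t=\sum_{i=1}^{r}a_iP_{m_i}(x_i)$ with every $m_i\in\{3,4\}$. Two constraints shape the choice of $F$. First, $F$ must contain no proper universal sub-sum, since otherwise $F$ itself would be universal. Second, $F$ must be \emph{locally universal}: a form omitting an entire residue class modulo some prime power omits infinitely many integers, not the single value $t$. Thus $t$ has to be missed for a global reason---it lies in the set of values represented by the genus of $F$ but not by $F$ itself---and such sporadic failures are precisely what the cuspidal part of the theta series of $F$ controls. Completing the square in each triangular summand, $\Theta_F(\tau)=\sum_{n\ge0}r_F(n)q^{n}$ becomes, after the normalization used in \cite{kane2023finiteness}, a holomorphic modular form of weight $r/2$ on some $\Gamma_0(4N)$ with a quadratic nebentypus, and I would check the two halves of the claim in turn.

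For the non-representation of $t$: because $P_{m_i}(x_i)\ge0$ for every $x_i\in\ZZ$, the equation $F=t$ confines all the variables to a bounded box, so $r_F(t)=0$ can be verified by a finite search (for the smaller $t$ it is usually exhibited by a congruence obstruction after specializing part of the variables). For the representation of every $n\neq t$: decompose $\Theta_F=E_F+C_F$ with $E_F$ in the Eisenstein subspace and $C_F$ cuspidal. Local universality gives $a_{E_F}(n)\gg_{\varepsilon}n^{r/2-1-\varepsilon}$ for all $n\ge1$, while the coefficients of $C_F$ enjoy a power-saving bound relative to this main term---from Deligne's estimate when $r$ is even, and from Duke's bound combined with the Shimura correspondence (and the spinor-genus analysis when $r=3$) when $r$ is odd. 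For $r\ge4$ the comparison is effective and yields $r_F(n)>0$ for all $n$ past an explicit bound $B_F$, after which the finitely many $n\le B_F$ with $n\neq t$ are checked directly. I would therefore try to choose each $F_t$ with at least four variables, so as to avoid the ternary subtleties---whose effective treatment rests on the classification of universal ternary sums in $\mathcal{C}_2$ recalled in the introduction---altogether.

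The principal difficulty is the construction: producing, for each of the twenty-two values, an everywhere-locally-universal sum of triangular numbers and squares, with no proper universal sub-sum, whose \emph{unique} global exception is $t$. I would search for candidates starting from the escalator tree built for Theorem~\ref{thm::finitenesstheorem34}---a leaf whose smallest non-represented value (its truant) is $t$ already represents $1,\dots,t-1$---and adjoin one or two further summands $cP_3(w)$ or $cP_4(w)$, with $c$ large enough that $t$ is not reintroduced but chosen, jointly with the ancillary variables, so that every integer above $t$ becomes representable. There is no formal guarantee that this succeeds at the first attempt, and the real work lies in certifying for each resulting $F_t$ that $t$ stays omitted while nothing else does---in effect a lighter, form-by-form rerun of the modular-forms analysis underlying Theorem~\ref{thm::finitenesstheorem34}.
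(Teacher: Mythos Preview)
Your overall strategy---take a node $F$ of the escalator tree whose truant is $t$, adjoin further summands with large coefficients so that $t$ is not reintroduced, and then certify that the augmented form misses only $t$---is exactly the skeleton of the paper's argument. Where you diverge is in the certification step: you propose to run, for each of the twenty-two values of $t$, a separate modular-forms analysis (Eisenstein main term versus Deligne/Duke bound on the cusp form, followed by a finite check), and you rightly flag this as ``the real work'' with ``no formal guarantee that this succeeds at the first attempt.''

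The paper avoids all of that with a single uniform and entirely elementary construction. Given $F$ with truant $t$, it sets
\[
H \;=\; F \;+\; (t+1)\bigl(P_3(y_1)+P_3(y_2)+P_3(y_3)\bigr) \;+\; (2t+1)P_3(z).
\]
Since $P_3+P_3+P_3$ is universal (Gauss), the middle block represents every non-negative multiple of $t+1$; combined with the fact that $F$ represents $0,1,\dots,t-1$, this already covers every $n\not\equiv t\pmod{t+1}$. For $n\equiv t\pmod{t+1}$ with $n\ge 2t+1$, setting $z=1$ subtracts $2t+1$ and lands on a non-negative multiple of $t+1$, which the middle block handles. Finally $H$ cannot represent $t$ itself, since non-negativity forces the added blocks to vanish and $F$ omits $t$. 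No theta series, no cusp-form estimates, no case-by-case search.

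So your plan would work, but it is far heavier than necessary; the key idea you are missing is that scaling a \emph{universal} sub-sum by $t+1$ lets you hit every residue class modulo $t+1$ at once, reducing the verification to two lines of arithmetic rather than twenty-two modular-forms computations.
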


The proof of the main theorems uses a standard technique of Bhargava \cite{bhargava2000}, which is the so-called escalator tree. To be precise, the \textit{escalator tree} $T$ for sums of triangular numbers and squares is a rooted tree constructed as follows. We define the root to be $F=0$ with depth $0$, and then we construct the nodes of depth $r+1$ from the nodes of depth $r$ inductively for any integers $r\geq0$ as follows. If a node of depth $r$ is universal, then it is a leaf of the tree. If a sum of triangular numbers and squares $F$ is not universal, we define the \textit{truant} $t(F)$ of the sum $F$ to be the least positive integer not represented by $F$ and the truant of the root is defined to be $1$ by convention. The children of a non-universal node $F(x_1,\ldots,x_r)=\sum_{j=1}^ra_jP_{m_j}(x_j)$ of depth $r$ are the sums of triangular numbers and squares of the form
$$
F(x_1,\cdots,x_r)+a_{r+1}P_{m_{r+1}}(x_{r+1})
$$
such that $a_{r}\leq a_{r+1}\leq t(F)$, $3\leq m_{r+1}\leq4$, and an additional restriction that $m_{r}\leq m_{r+1}$ if $a_{r}=a_{r+1}$ to avoid repeated nodes. It is implied by the explicit construction in the rest of the paper, that the escalator tree $T$ has finitely many nodes. Then it follows that the set (\ref{eqn::criticalinteger}) is exactly the same as the set consisting of truants of non-universal nodes in the escalator tree. So to prove Theorem \ref{thm::finitenesstheorem34}, it suffices to prove the universality of universal nodes and calculate the truants of non-universal nodes. 

We can study the nodes of depth $r\leq3$ using elementary methods. The truant of the root node is $1$ by convention. So it is clear that there are two nodes of depth $r=1$, $P_3$ and $P_4$, whose truants are $2$. It follows that the nodes of depth $r=2$ are $P_3+P_3$, $P_3+P_4$, $P_3+2P_3$, $P_3+2P_4$, $P_4+P_4$, $P_4+2P_3$, and $P_4+2P_4$. We note that there is no need to study the node $P_4+2P_3$ and its descendants because of the following observation of Euler.

\begin{lemma}
\label{thm::eulerobservation}
The nodes $P_3+P_3$ and $P_4+2P_3$ represent the same set of integers.
\end{lemma}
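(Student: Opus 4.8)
The plan is to show that the two nodes represent exactly the same integers by reducing both representation problems to a single classical condition. Concretely, I would prove that for an integer $n$, each of the statements ``$n=P_3(a)+P_3(b)$ for some $a,b\in\ZZ$'' and ``$n=x^2+2P_3(c)$ for some $x,c\in\ZZ$'' is equivalent to the assertion that $4n+1$ is a sum of two integer squares; the lemma is then immediate. The only input is the elementary identity $8P_3(x)+1=(2x+1)^2$ together with $P_4(x)=x^2$, plus the standard fact that a positive integer is a sum of two squares if and only if twice it is (via $2(a^2+b^2)=(a+b)^2+(a-b)^2$ and its reverse).

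For the node $P_3+P_3$: I would multiply $n=P_3(a)+P_3(b)$ by $8$ and add $2$ to obtain $8n+2=(2a+1)^2+(2b+1)^2$, which shows $8n+2$, and hence $4n+1$, is a sum of two squares. For the converse I would start from $4n+1=s^2+t^2$, note that $s$ and $t$ necessarily have opposite parity since $4n+1$ is odd, and set $2a+1=s+t$, $2b+1=s-t$ (both odd, hence $a,b\in\ZZ$), which reverses the computation.

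For the node $P_4+2P_3$: I would instead multiply $n=x^2+2P_3(c)$ by $4$ and add $1$, getting $4n+1=(2x)^2+(2c+1)^2$, again a sum of two squares; conversely, from $4n+1=s^2+t^2$ with $s$ even and $t$ odd (forced, up to swapping, by parity), I would put $x=s/2$ and $c=(t-1)/2\in\ZZ$ and rearrange to recover $n=x^2+2P_3(c)$. Negative $n$ require no argument, since then $4n+1<0$ is not a sum of two squares and all the summands $P_3(a),P_3(b),x^2,P_3(c)$ are nonnegative, so every statement fails at once.

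I do not expect a genuine obstacle here: the whole argument is a change of variables. The one place to be slightly careful is the parity bookkeeping in the two converse directions — ensuring that in any representation $4n+1=s^2+t^2$ one summand is even and the other odd — but this is automatic from $4n+1$ being odd. Combining the two equivalences shows that $P_3+P_3$ and $P_4+2P_3$ represent the same set of integers.
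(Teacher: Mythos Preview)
Your argument is correct: both representation problems reduce, via completing the square, to the condition that $4n+1$ be a sum of two integer squares, and your parity bookkeeping in the converse directions is sound. The paper itself gives no proof, merely citing Euler's correspondence with Goldbach; what you have written is presumably a reconstruction of exactly the elementary change of variables Euler had in mind, and it stands on its own without any external reference.
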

\begin{proof}
This is an observation of Euler in communication with Goldbach. See \cite[pp. 458-460]{fuss1843correspondance}.
\end{proof}

So we ignore the node $P_4+2P_3$ and its descendants in the following discussion. It is easy to verify that every node of depth $r=2$ is not universal, whose truant is given in Table \ref{tbl::truant2}.

{\footnotesize\setlength{\tabcolsep}{1pt}
\begin{longtable}{c|cccccc}
\caption{The truants of the non-universal nodes of depth $r=2$.}
\label{tbl::truant2}\\
\toprule
$F$ & $P_3+P_3$ & $P_3+P_4$ & $P_3+2P_3$ & $P_3+2P_4$ & $P_4+P_4$ & $P_4+2P_4$ \\
\midrule
$t(F)$ & $5$ & $8$ & $4$ & $4$ & $3$ & $5$ \\
\bottomrule
\end{longtable}}

Hence we turn to study the nodes of depth $r=3$. There are universal nodes of depth $r=3$ and they were classified in a series of papers \cite{liouville1863nouveaux,sun2007mixed,guo2007mixed,oh2009mixed}. We summarize the results in the next proposition.

\begin{proposition}
\label{thm::subtree123457}
For nodes of depth $r=3$ in the escalator tree $T$, we have:
\begin{enumerate}
\item Every child of the node $P_3+P_3$ is universal except $P_3+P_3+3P_3$, $P_3+P_3+3P_4$, and $P_3+P_3+5P_4$.
\item Every child of the node $P_3+P_4$ is universal except $P_3+P_4+5P_3$, $P_3+P_4+5P_4$, $P_3+P_4+6P_4$, $P_3+P_4+7P_3$, and $P_3+P_4+7P_4$.
\item Every child of the node $P_3+2P_3$ is universal.
\item Every child of the node $P_3+2P_4$ is universal except $P_3+2P_4+3P_3$, $P_3+2P_4+3P_4$, and $P_3+2P_4+4P_4$.
\item Every child of the node $P_4+P_4$ is universal except $P_4+P_4+P_4$, $P_4+P_4+2P_4$, $P_4+P_4+3P_3$, and $P_4+P_4+3P_4$.
\item Every child of the node $P_4+2P_4$ is universal except $P_4+2P_4+2P_4$, $P_4+2P_4+3P_3$, $P_4+2P_4+3P_4$, $P_4+2P_4+4P_4$, $P_4+2P_4+5P_3$, and $P_4+2P_4+5P_4$.
\end{enumerate}
The truants of the non-universal nodes are given in Table \ref{tbl::truant3}.
\end{proposition}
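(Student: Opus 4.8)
The plan is to convert each ternary sum into a representation question for a diagonal ternary quadratic form and then appeal to the classical arithmetic theory of such forms. Recall the identities $8P_3(x)+1=(2x+1)^2$ and $P_4(x)=x^2$. Hence, for a node $F=a_1P_{m_1}+a_2P_{m_2}+a_3P_{m_3}$ with each $m_j\in\{3,4\}$, multiplying the equation $F(x_1,x_2,x_3)=n$ by $8$ and completing the square shows that $n$ is represented by $F$ if and only if the integer $8n+\sum_{m_j=3}a_j$ is represented by the diagonal ternary form $Q_F(X_1,X_2,X_3)=\sum_{m_j=3}a_jX_j^2+\sum_{m_j=4}8a_jX_j^2$ with the $2$-adic side condition that $X_j$ is odd exactly when $m_j=3$. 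Thus $F$ is universal precisely when $Q_F$ represents every integer in a prescribed residue class modulo $8$ subject to these congruence constraints on the variables. The first step is to record this translation for each of the finitely many children of the six depth-$2$ nodes.

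For the non-universal children the truant is then computed by an elementary finite search. Since $P_3(x)=x(x+1)/2\geq0$ and $P_4(x)=x^2\geq0$ for all $x\in\ZZ$, the values of $F$ are nonnegative and, for a fixed target $n$, the constraints $a_jP_{m_j}(x_j)\leq n$ bound each $x_j$ to a finite interval. So for each non-universal node $F$ and each positive integer $m<t(F)$ one exhibits a representation by a bounded search, and one checks that $t(F)$ itself is not represented; the latter is often seen more transparently from the quadratic-form translation, where the relevant class modulo $8$ fails to be $2$-adically (or more generally locally) represented, or is a spinor-exceptional integer for $Q_F$. These computations populate Table \ref{tbl::truant3}.

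For the universal children the claims are exactly those established in the cited literature: the purely triangular cases, i.e.\ universality of $aP_3+bP_3+cP_3$ for the triples occurring here, go back to Liouville \cite{liouville1863nouveaux} and were completed in \cite{sun2007mixed,guo2007mixed,oh2009mixed}, while the mixed cases involving one or more $P_4$ are precisely the ``mixed sums of squares and triangular numbers'' treated in \cite{sun2007mixed,guo2007mixed,oh2009mixed}. For each such node the verification behind those results has the following shape: (i) the target residue class modulo $8$ is represented by the genus of $Q_F$ everywhere locally, including at $2$ with the required parity of the variables; and (ii) the square classes of integers represented by the genus but not by $Q_F$ itself (the spinor exceptions) are identified and shown to be disjoint from the target class. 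Granting (i) and (ii), every eligible integer is represented by $Q_F$, hence $F$ is universal. Our proof simply imports these statements.

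The main obstacle, and the reason the proof defers to \cite{liouville1863nouveaux,sun2007mixed,guo2007mixed,oh2009mixed} rather than reproving everything, is step (ii) together with the bookkeeping of the odd-variable conditions. Several of the ternary forms $Q_F$ arising here are not alone in their genus, so one must determine the spinor genera, locate the finitely many spinor-exceptional square classes, and confirm that they do not meet the arithmetic progression dictated by $8n+\sum_{m_j=3}a_j$; moreover one must ensure throughout that representability of $8n+\sum a_j$ by $Q_F$ with the correct parities of the variables is genuinely equivalent to representability of $n$ by $F$, rather than merely implied by it. This $2$-adic compatibility, and the case-by-case spinor analysis, is exactly the delicate arithmetic content carried out in the cited papers.
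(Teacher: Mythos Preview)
Your proposal is correct and takes essentially the same approach as the paper: both defer the classification of universal ternary children to the cited works \cite{liouville1863nouveaux,sun2007mixed,guo2007mixed,oh2009mixed} and note that the truants of the non-universal ones follow by a direct finite check. Your write-up adds a helpful explanation of the quadratic-form translation and a sketch of the genus/spinor shape of the arguments in those references, but the substantive content---import the literature, then compute truants---is identical to the paper's one-line proof.
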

\begin{proof}
See the papers \cite{liouville1863nouveaux,sun2007mixed,guo2007mixed,oh2009mixed} for the classification of universality ternary sums and it is straightforward to calculate the truants of these non-universal nodes.
\end{proof}

{\scriptsize\setlength{\tabcolsep}{1pt}
\begin{longtable}{c|ccccccc}
\caption{The truants of the non-universal nodes of depth $r=3$.}
\label{tbl::truant3}\\
\toprule
Node & $\underline{P_3+P_3+3P_3}$ & $\underline{P_3+P_3+3P_4}$ & $P_3+P_3+5P_4$ & $\underline{P_3+P_4+5P_3}$ & $\underline{P_3+P_4+5P_4}$ & $P_3+P_4+6P_4$ & $P_3+P_4+7P_3$ \\
\midrule
Truant & $8$ & $8$ & $19$ & $13$ & $13$ & $47$ & $20$ \\
\midrule
Node & $P_3+P_4+7P_4$ & $\underline{P_3+2P_4+3P_3}$ & $\underline{P_3+2P_4+3P_4}$ & $P_3+2P_4+4P_4$ & $\underline{P_4+P_4+P_4}$ & $\underline{P_4+P_4+2P_4}$ & $\underline{P_4+P_4+3P_3}$ \\
\midrule
Truant & $20$ & $7$ & $7$ & $20$ & $7$ & $14$ & $6$ \\
\midrule
Node & $\underline{P_4+P_4+3P_4}$ & $\underline{P_4+2P_4+2P_4}$ & $P_4+2P_4+3P_3$ & $\underline{P_4+2P_4+3P_4}$ & $\underline{P_4+2P_4+4P_4}$ & $P_4+2P_4+5P_3$ & $\underline{P_4+2P_4+5P_4}$ \\
\midrule
Truant & $6$ & $7$ & $23$ & $10$ & $14$ & $10$ & $10$ \\
\bottomrule
\end{longtable}}

This finishes the discussion of nodes of depth $r=3$ and we can move on to the study of nodes of depth $r=4$, which is the major task in this paper. To study nodes of depth $r=4$, a combination of arithmetic methods and analytic methods using the theory of modular forms is applied. So the rest of the paper is organized as follows. In Section \ref{sec::arithmeticmethods}, we study the representations by the children of underlined nodes in Table \ref{tbl::truant3} together with another non-underlined node $P_3+P_4+6P_4$ using arithmetic methods. In Section \ref{sec::analyticmethods}, we study the other nodes of depth $r=4$ using an analytic method based on the theory of modular forms. In the last section, we study nodes of depth $r\geq5$ and prove the main theorems. 

\section{Nodes of Depth $r=4$: Arithmetic Methods}
\label{sec::arithmeticmethods}

In this section, we apply arithmetic methods to study the representations of these nodes. Actually, we can prove every result in this section with the analytic method used in the next section. However, the analytic method is usually more time-consuming than arithmetic methods. Thus, we apply arithmetic methods whenever they are applicable.

We adopt the formulation in terms of quadratic forms with congruence conditions. More precisely, for a sum $F$ of triangular numbers and squares, we construct a quadratic form $Q$ with congruence conditions with two integers $\mu\geq1,\rho\in\ZZ$ such that for any integer $n\in\ZZ$, we have
\begin{equation}
\label{eqn::equalrepresentations}
r_F(n)=r_Q(\mu n+\rho),
\end{equation}
where we denote by $r_F(n)$ the number of representations of $n$ by the sum $F$ and denote by $r_Q(n)$ the number of representations of $n$ by the quadratic form $Q$ with congruence conditions for any integer $n\in\ZZ$.

The construction is given by completing the squares in the sum $F$. If $F=a_1P_3+\cdots+a_{r_1}P_3+b_1P_4+\cdots+b_{r_2}P_4$ is a sum of triangular numbers and squares with $r_1\geq1$, then we take the quadratic form $Q$ with congruence conditions as follows
$$
Q(x_1,\ldots,x_{r_1},y_1,\ldots,y_{r_2})\coloneqq a_1(2x_1+1)^2+\cdots+a_{r_1}(2x_{r_1}+1)^2+2b_1(2y_1)^2+\cdots+2b_{r_2}(2y_{r_2})^2,
$$
with $\mu\coloneqq8$ and $\rho\coloneqq a_1+\cdots+a_{r_1}$. If $F$ is a sum of squares, it is a quadratic form without congruence conditions. In this case, we simply take $Q=F$ with $\mu=1$ and $\rho=0$. For both cases, it is to verify that (\ref{eqn::equalrepresentations}) holds for any integer $n\in\ZZ$.

The arithmetic method used in the next proposition is based on the fact that the quadratic form $Q$ with congruence conditions constructed above corresponding to each underlined node in Table \ref{tbl::truant3} is of class number one, which means that the genus of $Q$ consists of one class, that is, the class of $Q$. In this case, by local-global principle \cite[Corollary 4.8]{chan2013representations}, an integer is represented by $Q$ if and only if it is represented by $Q$ over $\ZZ_p$ for any prime number $p$. Therefore we can find a large set of positive integers represented by $Q$ by local computations. With this knowledge, we then can determine the sets of positive integers represented by the children of this non-universal node.

\begin{proposition}
\label{thm::easysubtrees}
Every child of the underlined nodes in Table \ref{tbl::truant3} is universal, except the children $P_3+P_4+5P_3+10P_3$, $P_3+P_4+5P_3+10P_4$, $P_3+P_4+5P_4+5P_4$, and $P_4+2P_4+5P_4+5P_4$. For these non-universal children, we have
\begin{enumerate}
\item $P_3+P_4+5P_3+10P_3(\ZZ^4)\supseteq\{n\in\NN~|~n\neq23\text{ and }n\not\equiv93,123\pmod{125}\}$.
\item $P_3+P_4+5P_3+10P_4(\ZZ^4)=\{n\in\NN~|~n\neq23\}$.
\item $P_3+P_4+5P_4+5P_4(\ZZ^4)=\{n\in\NN~|~n\neq18\}$.
\item $P_4+2P_4+5P_4+5P_4(\ZZ^4)=\{n\in\NN~|~n\neq15\}$.
\end{enumerate}
\end{proposition}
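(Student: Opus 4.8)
The plan is to treat each underlined node $F$ of depth $3$ in Table~\ref{tbl::truant3} separately and, for each child $G=F+a_4P_{m_4}$ of $F$ permitted by the escalator construction (so $a_3\le a_4\le t(F)$, $m_4\in\{3,4\}$, and $m_3\le m_4$ when $a_4=a_3$), to decide universality, or describe the image, from an explicit description of the set $F(\ZZ^3)$ of represented integers. For a given underlined $F$, one completes the square to obtain the associated ternary form $Q$ with congruence conditions and the shift parameters $\mu=8$, $\rho=\sum_ia_i$ satisfying $r_F(n)=r_Q(8n+\rho)$ as in (\ref{eqn::equalrepresentations}). By the class-number-one property recalled above, the local--global principle \cite[Corollary 4.8]{chan2013representations} applies to $Q$: a positive integer $m$ is represented by $Q$ exactly when it is represented by $Q$ over $\ZZ_p$ for every prime $p$, which is a finite computation involving only the primes dividing $2\det Q$. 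Translating the resulting $p$-adic conditions back through $8n+\rho$ gives, for each underlined $F$, an explicit description
$$
F(\ZZ^3)=\{\,n\in\NN\;:\;n\notin\mathcal{B}_F\,\},
$$
where the ``bad set'' $\mathcal{B}_F$ is cut out by finitely many congruence and $p$-adic valuation conditions and whose least element is the truant $t(F)$ recorded in Table~\ref{tbl::truant3}.

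Now fix a child $G=F+a_4P_{m_4}$. Since $a_4P_{m_4}(x)$ runs through $0,a_4,3a_4,6a_4,\dots$ (if $m_4=3$) or $0,a_4,4a_4,9a_4,\dots$ (if $m_4=4$), an integer $n$ is represented by $G$ as soon as $n-a_4P_{m_4}(x)\in F(\ZZ^3)$ for some $x$ with $0\le a_4P_{m_4}(x)\le n$. So $G$ is universal provided that for every positive integer $n$ at least one admissible shift $n-a_4P_{m_4}(x)$ avoids $\mathcal{B}_F$. Because $\mathcal{B}_F$ is governed by finitely many congruence and valuation conditions, this reduces to a finite problem: a case analysis on the residue of $n$ modulo a suitable modulus, together with the behaviour of the relevant $p$-adic valuations, shows that for all but finitely many $n$ one of the first few shifts $n$, $n-a_4$, $n-3a_4$ (resp. $n-4a_4$) lies outside $\mathcal{B}_F$, and the remaining small $n$ are checked by hand. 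Running this for every child of every underlined node leaves exactly the four exceptional children in the statement. For each of them the same bookkeeping isolates the integers that cannot be reached: for $P_3+P_4+5P_3+10P_3$, $P_3+P_4+5P_3+10P_4$, $P_3+P_4+5P_4+5P_4$, and $P_4+2P_4+5P_4+5P_4$ one verifies that $23$, $23$, $18$, and $15$ respectively are genuinely omitted --- for instance $23-10P_3(x)\in\{23,13\}$ for $|x|\le1$ and larger $|x|$ overshoots, while $13,23\in\mathcal{B}_{P_3+P_4+5P_3}$ --- and, in cases (2)--(4), that these are the only omissions, which yields the stated equalities.

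The bulk of the work, and the only genuinely routine part, is the local analysis at the primes dividing $2\det Q$ for each of the fourteen underlined forms, using the class-number-one property and keeping careful track of the parity/congruence data encoded by $\mu=8$ and $\rho$ (the class-number-one forms can be matched against the known classifications of class-number-one ternary lattices). The one delicate point is case (1): the fourth-variable shifts $10P_3(x)=5x(x+1)$ realise only the residues $0,5,10\pmod{25}$, which is not enough residue-class freedom to exclude $n\equiv93,123\pmod{125}$, so the elementary method delivers only the inclusion $\supseteq$; by contrast, in cases (2)--(4) the corresponding analysis closes up and gives equalities. This inclusion is nevertheless sufficient for the escalator argument completed in the last section: the node $P_3+P_4+5P_3+10P_3$ still has truant $23$, and for any descendant one only needs that its parent represents every integer outside $\{23\}$ together with the residues $93,123\pmod{125}$, which is exactly the content of the inclusion. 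Finally, one cross-checks the list of children against Lemma~\ref{thm::eulerobservation} so as not to re-examine nodes already known to be universal in an equivalent form.
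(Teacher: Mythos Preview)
Your proposal is correct and follows essentially the same approach as the paper: for each underlined ternary node $F$, use the class-number-one property of the associated form $Q$ together with the local--global principle to obtain an explicit description of $F(\ZZ^3)$, and then show that for each child $F+a_4P_{m_4}$ some shift $n-a_4P_{m_4}(x)$ lands in $F(\ZZ^3)$. The paper illustrates this with $F=P_3+P_3+3P_3$ and leaves the remaining underlined nodes to the reader, exactly as you do.

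One point worth flagging: the paper works with a convenient \emph{over}-approximation of $\mathcal{B}_F$ (for $P_3+P_3+3P_3$ it uses the residues $\{8,17,23,26\}\pmod{27}$) and then disposes of the residual case $n\equiv23\pmod{27}$ by a scaling/descent step, using that $Q$ represents $9k$ iff it represents $k$. Your formulation instead builds the $3$-adic valuation directly into the description of $\mathcal{B}_F$. Both are fine, but be aware that if you use the cruder congruence-only description, the shifting argument alone can fail (for example, for the child $P_3+P_3+3P_3+6P_4$ the shifts $6w^2$ cycle through $\{8,17,23,26\}\pmod{27}$ when $n\equiv23$), so the descent ingredient is genuinely needed there; your phrase ``behaviour of the relevant $p$-adic valuations'' should be understood as encoding this. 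With that caveat, your outline matches the paper's proof.
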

\begin{proof}
We prove the universality of the children of the node $P_3+P_3+3P_3$ to illustrate the method. For the children of other nodes, we omit the details.

From Table \ref{tbl::truant3}, we know that the truant of the node $P_3+P_3+3P_3$ is $8$. So we have to show that every child of the form $P_3+P_3+3P_3+a_4P_{m_4}$ with $3\leq a_4\leq8$ and $m_4=3,4$ is universal. By the triangular theorem of eight in \cite{bosma2013triangular}, every child of the form $P_3+P_3+3P_3+a_4P_3$ with $3\leq a_4\leq8$ is universal. It remains to prove the universality for every child of the form $P_3+P_3+3P_3+a_4P_4$ with $3\leq a_4\leq 8$. 

Using the above construction of the quadratic form with congruence conditions, it is equivalent to showing that
\begin{equation}
\label{eqn::equation34}
Q(x,y,z)\coloneqq(2x+1)^2+(2y+1)^2+3(2z+1)^2=8n+5-2a_4(2w)^2,
\end{equation}
is solvable with $x,y,z,w\in\ZZ$ for any positive integer $n$. Because the quadratic form $Q$ with the congruence conditions is of class number one, we see that it represents any positive integer of the form $8n+5$ such that $n\not\equiv8,17,23,26\pmod{27}$ by Hensel's lemma. Therefore the equation (\ref{eqn::equation34}) is solvable with $w=0$ for such integers. Furtheremore, if $n\not\equiv26\pmod{27}$ or $a_4\neq3$, it is solvable with $w=1$ for any positive integer $n\equiv8,17,26\pmod{27}$ such that $n\geq a_4$, and if $a_4=3$, it is solvable with $w=2$ for any positive integer $n\equiv26\pmod{27}$ such that $n\geq4a_4$. A quick calculation reveals that it is solvable for any integer $n\leq4a_4-1$ such that $n\not\equiv23\pmod{27}$. Hence the equation (\ref{eqn::equation34}) is solvable for any integer $n\not\equiv23\pmod{27}$.

It remains to show that it is solvable for any positive integer $n\equiv23\pmod{27}$. Note that in this case the integer $8n+5$ is divisible by $9$. Therefore, if we write $n=27m+23$ for some integer $m\geq0$, then it reduces to solving (\ref{eqn::equation34}) for $n=3m+2$. If $3m+2\not\equiv23\pmod{27}$, then we are done. Otherwise we can repeat this argument until it is solvable. Hence every child of the form $P_3+P_3+3P_3+a_4P_4$ with $3\leq a_4\leq8$ is universal.

In a similar manner, we can prove the universality of universal children and determine sets of positive integers represented by non-universal children of the other underlined nodes in Table \ref{tbl::truant3}. For the underlined nodes in Table \ref{tbl::truant3} which are sums of squares, one may refer to \cite{dickson1927integers} for the characterization of positive integers represented by these nodes.
\end{proof}
\begin{remark}
In fact, using the analytic method given in the next section, we can prove that
$$
P_3+P_4+5P_3+10P_3(\ZZ^4)\supseteq\{n\in\NN~|~8n+16\neq200\cdot25^a\text{ with }a\in\NN\},
$$
though the weaker result in Proposition \ref{thm::easysubtrees} is enough for showing that every child of the node $P_3+P_4+5P_3+10P_3$ in the escalator tree $T$ is universal.
\end{remark}

For proving the next lemma, we need a more advanced arithmetic tool developed in \cite{oh2011ternary}.

\begin{lemma}
\label{thm::oh}
The node $P_3+P_4+6P_4$ represents every positive integer $n\equiv0,1\pmod{5}$.
\end{lemma}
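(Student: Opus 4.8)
The plan is to translate the claim into a statement about a ternary quadratic form and then feed it into the machinery of \cite{oh2011ternary}. Applying the completing-the-square construction preceding Proposition~\ref{thm::easysubtrees} to $F = P_3 + P_4 + 6P_4$ (here $r_1 = 1$ and $r_2 = 2$, with square coefficients $b_1 = 1$ and $b_2 = 6$) produces
$$
Q(u, y, z) = u^2 + 8y^2 + 48z^2, \qquad \mu = 8, \quad \rho = 1,
$$
so that $r_F(n) = r_Q(8n+1)$ for every $n$. Since $8n + 1$ is odd while $8y^2 + 48z^2$ is even, the variable $u$ is automatically odd in any representation, the congruence condition is vacuous, and it suffices to prove that the honest ternary form $Q$ represents every integer $m$ with $m \equiv 1 \pmod{8}$ and $\legendre{m}{5} = 1$ --- the latter being exactly the translation of $n \equiv 0, 1 \pmod{5}$. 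Note that $Q$ is \emph{not} of class number one: it represents $377$ over every $\ZZ_p$ (by the local analysis below) but not over $\ZZ$, since $377 = 8 \cdot 47 + 1$ and $47$ is the truant of $F$ recorded in Table~\ref{tbl::truant3}. So, unlike in Proposition~\ref{thm::easysubtrees}, the local--global principle does not by itself finish the proof.

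First I would run the local analysis to show that \emph{every} $m \equiv 1 \pmod{8}$ is represented by $\genus(Q)$. For $p \nmid 6$ the form $Q$ is $p$-adically unimodular, hence $\ZZ_p$-universal; at $p = 2$ every $m \equiv 1 \pmod{8}$ is a square in $\ZZ_2$, so $m = u^2$ is represented by $Q$ over $\ZZ_2$; and at $p = 3$, where $Q \cong_{\ZZ_3} \langle 1, 2 \rangle \perp 3\langle 1 \rangle$, the unimodular binary part $\langle 1, 2 \rangle$ is isotropic (as $-2$ is a square in $\QQ_3$), hence a hyperbolic plane representing all of $\ZZ_3$. Thus $m$ lies in $\genus(Q)(\ZZ^3)$ regardless of its residue modulo $5$; the condition $\legendre{m}{5} = 1$ enters only in the global step.

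The global step, and the heart of the matter, is to determine the spinor genus structure of $Q$ and to invoke the effective representation theorem for ternary forms developed in \cite{oh2011ternary}. One identifies the spinor genera inside $\genus(Q)$ and the associated finite list of spinor exceptional square classes, together with the finitely many small exceptions the accompanying estimates leave over; away from these, every integer represented by $\genus(Q)$ is represented by $\spinor(Q)$, and --- once it is checked that each spinor genus of $Q$ consists of a single class --- by $Q$ itself. The decisive arithmetic point is that each of these exceptional square classes, intersected with the progression $m \equiv 1 \pmod{8}$, contains only integers $m$ with $5 \mid m$ or $\legendre{m}{5} = -1$: for example the class of $377$ contributes the integers $377 k^2$ with $k$ odd, and $377 k^2 \equiv 0, 2, \text{ or } 3 \pmod{5}$. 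Hence an $m \equiv 1 \pmod{8}$ with $\legendre{m}{5} = 1$ lies in no exceptional class and is represented by $Q$. One then finishes by checking by hand that the finitely many remaining small such $m$ are represented --- equivalently, that $F$ represents the finitely many small $n \equiv 0, 1 \pmod{5}$ not already covered --- which is immediate.

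The main obstacle is this global step: carrying out the spinor genus computation for $Q = \langle 1, 8, 48 \rangle$ and extracting from \cite{oh2011ternary} an \emph{effective} conclusion valid on the whole progression, rather than merely for $n$ sufficiently large, while checking that the exceptional square classes are exactly those ruled out by the quadratic residue condition modulo $5$; this last compatibility is what pins the hypothesis down to $n \equiv 0, 1 \pmod{5}$. The complementary classes $n \equiv 2, 3, 4 \pmod{5}$, where $F$ genuinely fails to represent $47$ (and a sparser set governed by the square class of $377$), lie beyond the arithmetic method and are not needed here: for the children of $P_3 + P_4 + 6P_4$ they will be absorbed by the extra summand.
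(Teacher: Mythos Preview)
Your plan has the right shape---reduce to a ternary form and invoke \cite{oh2011ternary}---but it leaves the essential computation undone and mischaracterizes the tool, and in doing so misses a simplification that makes the argument tractable.

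The paper does not work with $\langle1,8,48\rangle$. It first observes that any solution of $x^2+3y^2+8z^2=8n+1$ automatically has $x$ odd and $y\equiv0\pmod4$ (check modulo $8$), so one may work with the \emph{unconstrained} form $Q_1=\langle1,3,8\rangle$, whose genus contains only two classes, $Q_1$ and $Q_2(x,y,z)=3x^2+3y^2+4z^2+2xy-2xz+2yz$. Your form $\langle1,8,48\rangle$ has a considerably larger genus, and you never compute it; the claim ``each spinor genus of $Q$ consists of a single class'' is asserted without verification.

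More seriously, \cite{oh2011ternary} does not supply a black-box ``effective representation theorem'' producing a list of spinor exceptional square classes. It is a constructive transfer method that must be executed explicitly, and this is exactly what the paper does: it partitions the $Q_2$-representations of $m\equiv1\pmod5$ into cosets $R_1,\ldots,R_5\subseteq(\ZZ/5\ZZ)^3$, writes down four explicit rational linear identities $I_1,\ldots,I_4$ carrying $Q_2$-representations in $R_1,\ldots,R_4$ to $Q_1$-representations, and for the residual coset $R_5$ produces an explicit matrix $T$ with $Q_2\circ T=Q_2$ whose iterates eventually land in $R_1\cup\cdots\cup R_4$ unless the starting vector lies in the $\det(T)$-eigenspace of $T$ (this last step is \cite[Lemma~2.1]{oh2011ternary}). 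That eigenspace is spanned by $(1,-1,0)$ with $Q_2(1,-1,0)=4$, so the only obstruction is $m=4t^2$, which never equals $8n+1$. The same identities handle $m\equiv-1\pmod5$. The hypothesis $n\equiv0,1\pmod5$ thus enters not through a compatibility check on spinor exceptions, but because the cosets $R_1,\ldots,R_5$ were chosen to exhaust the solutions of $Q_2\equiv\pm1\pmod5$. Your proposal states the desired conclusion but performs none of this analysis; the ``main obstacle'' you flag at the end is in fact the entire proof.
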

\begin{proof}
It is equivalent to showing that
$$
Q(x,y,z)\coloneqq x^2+3y^2+8z^2=8n+1,
$$
is solvable with integers $x\equiv1\pmod{2}$ and $y\equiv0\pmod{4}$ for any positive integer $n\equiv0,1\pmod{5}$. The congruence conditions are superfluous because $x^2+3y^2+8z^2=8n+1$ implies that $x\equiv1\pmod{2}$ and $y\equiv0\pmod{4}$ by passing to $\ZZ/8\ZZ$. The genus of the quadratic form $Q$ consists of two classes, one of which contains the quadratic form $Q_1(x,y,z)\coloneqq Q(x,y,z)$ and the other contains another quadratic form $Q_2(x,y,z)\coloneqq3x^2+3y^2+4z^2+2xy-2xz+2yz$.

We claim that $Q_1$ represents any positive integer $n\equiv1\pmod{5}$ not of the form $n\neq4t^2$ with any integer $t\in\ZZ$. It is easy to verify that any positive integer $n\equiv1\pmod{5}$ is locally represented by $Q_1$. If $Q_1$ represents $n$, then we are done. Otherwise we see that $Q_2$ represents $n$ by local-global principle \cite[Corollary 4.8]{chan2013representations}. Set
\begin{align*}
R_1&\coloneqq\overline{\pm(0,0,2)}\cup\overline{\pm(1,2,3)}\cup\overline{\pm(1,2,4)}\cup\overline{\pm(2,4,0)}\cup\overline{\pm(2,4,4)},\\
R_2&\coloneqq\overline{\pm(2,0,3)}\cup\overline{\pm(2,1,1)}\cup\overline{\pm(2,2,4)}\cup\overline{\pm(2,3,2)},\\
R_3&\coloneqq\overline{\pm(1,3,0)}\cup\overline{\pm(1,3,4)}\cup\overline{\pm(2,1,2)},\\
R_4&\coloneqq\overline{\pm(0,2,2)}\cup\overline{\pm(2,2,1)},\\
R_5&\coloneqq\overline{\pm(2,3,0)},
\end{align*}
where $\overline{(a,b,c)}$ stands for the coset $(a,b,c)+5\ZZ^3$ in $\ZZ^3/5\ZZ^3$. Since $n\equiv1\pmod{5}$, it is not hard to show that any representation $(x,y,z)$ of $n$ lies in the union $R_1\cup R_2\cup R_3\cup R_4\cup R_5$. By a computer searching, we have the following identities relating $Q_1$ with $Q_2$ as follows,
\begin{align*}
I_1\colon&~Q_1\left(\frac{4x+8y+5z}{5},\frac{-3x-y+5z}{5},\frac{-2x+y}{5}\right)=Q_2(x,y,z),\\
I_2\colon&~Q_1\left(\frac{4x+8y-z}{5},\frac{-3x-y-3z}{5},\frac{-2x-y+3z}{5}\right)=Q_2(x,y,z),\\
I_3\colon&~Q_1\left(\frac{8x+4y-5z}{5},\frac{-x-3y-5z}{5},\frac{-x+2y}{5}\right)=Q_2(x,y,z),\\
I_4\colon&~Q_1\left(\frac{8x+4y+z}{5},\frac{-x-3y+3z}{5},\frac{-x+2y+3z}{5}\right)=Q_2(x,y,z).
\end{align*}
For any $1\leq i\leq 4$, assuming that $(x,y,z)\in R_i$ is a representation of $n\equiv1\pmod{5}$ by $Q_2$, then we can construct a representation of $n$ by $Q_1$ via the identity $I_i$. Therefore if $Q_2$ admits a representation of $n\equiv1\pmod{5}$ in $R_1\cup R_2\cup R_3\cup R_4$, then we are done. 

So it remains to deal with representations $(x,y,z)\in R_5$ by $Q_2$. We set
$$
T\coloneqq\frac{1}{5}
\begin{pmatrix}
0  & -5 &  0\\
-1 &  4 &  6\\
-4 & -4 & -1\\
\end{pmatrix}.
$$
Suppose that $(x,y,z)\in\overline{\pm(2,3,0)}$ is a representation of $n$ by $Q_2$. Then it is easy to verify that $T(x,y,z)$ is another representation of $n$ by $Q_2$ lying in $R_1\cup R_2\cup R_3\cup R_4\cup R_5$ by the following identity
$$
Q_2\left(T(x,y,z)\right)=Q_2\left(-y,\frac{-x+4y+6z}{5},\frac{-4x-4y-z}{5}\right)=Q_2(x,y,z).
$$
If $T^k(x,y,z)$ is a representation in $R_1\cup R_2\cup R_3\cup R_4$ for any positive integer $k\geq1$, then we are done. Otherwise by \cite[Lemma 2.1]{oh2011ternary}, the representation $(x,y,z)$ lies in a one-dimensional eigenspace of $T$ with eigenvalue $\det(T)$. Since the primitive eigenvector in this eigenspace is $(1,-1,0)$ and $Q_2(1,-1,0)=4$, we see that any positive integer $n\equiv1\pmod{5}$ not of the form $4t^2$ with any integer $t\in\ZZ$ is represented by $Q_1$. Arguing in a similar manner with the same collection of identities, one can show that any positive integer $n\equiv-1\pmod{5}$ not of the form $4t^2$ with any integer $t\in\ZZ$ is represented by $Q_1$.

Back to the proof of the lemma, it is easy to see that $8n+1\equiv\pm1\pmod{5}$ if and only if $n\equiv0,1\pmod{5}$ and every integer $8n+1$ is not of the form $4t^2$ with any integer $t\in\ZZ$. Therefore the node $P_3+P_4+6P_4$ represents any positive integer $n\equiv0,1\pmod{5}$.
\end{proof}

\begin{proposition}
\label{thm::partofsubtree2-3}
Every child of the node $P_3+P_4+6P_4$ of the form $P_3+P_4+6P_4+a_4P_3$ such that $7\leq a_4\leq 47$ and $a_4\equiv1,4\pmod{5}$, or of the form $P_3+P_4+6P_4+a_4P_4$ such that $6\leq a_4\leq47$ and $a_4\equiv2,3\pmod{5}$ is universal.
\end{proposition}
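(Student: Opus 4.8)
The plan is to represent every positive integer $n$ by such a child $F=P_3+P_4+6P_4+a_4P_{m_4}$ (with $m_4=3$ and $a_4\equiv1,4\pmod5$, or $m_4=4$ and $a_4\equiv2,3\pmod5$; in either case $a_4\le47$) by peeling off the last term and applying Lemma~\ref{thm::oh}, exploiting the flexibility in the residue $n\bmod5$ that the hypothesis on $a_4$ provides. Given $n$, I would look for an integer $x_4$ such that the remainder $m\coloneqq n-a_4P_{m_4}(x_4)$ is nonnegative and either $m\le46$ or $m\equiv0,1\pmod5$. In the first case $P_3+P_4+6P_4$ represents $m$ because its truant is $47$ by Table~\ref{tbl::truant3} (or $m=0$, which is trivial), and in the second case $P_3+P_4+6P_4$ represents $m$ by Lemma~\ref{thm::oh}; either way $F$ represents $n$.

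The crucial point is an elementary computation modulo $5$. One has $P_3(x)\bmod5\in\{0,1,3\}$, the values $0,1,3$ being attained at $x=0,1,2$, and $P_4(x)\bmod5=x^2\bmod5\in\{0,1,4\}$, the values $0,1,4$ being attained at $x=0,1,2$. Put $S\coloneqq a_4\cdot\{0,1,3\}\bmod5$ if $m_4=3$ and $S\coloneqq a_4\cdot\{0,1,4\}\bmod5$ if $m_4=4$. Running over the residue of $a_4$ modulo $5$, one checks that $S\cup(S+1)=\ZZ/5\ZZ$ exactly when $a_4\equiv1,4\pmod5$ in the case $m_4=3$ and exactly when $a_4\equiv2,3\pmod5$ in the case $m_4=4$, which is precisely the hypothesis of the proposition. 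This condition is equivalent to: for every residue of $n$ modulo $5$ there is $x_4\in\{0,1,2\}$ with $n-a_4P_{m_4}(x_4)\equiv0,1\pmod5$. Moreover, inspecting the cases shows that when $n\equiv0,1\pmod5$ one may take $x_4=0$, while when $n\equiv2,3,4\pmod5$ the required value $P_{m_4}(x_4)$ is never $0$ and may be taken in $\{1,3\}$ if $m_4=3$ and in $\{1,4\}$ if $m_4=4$; in particular $a_4P_{m_4}(x_4)\le4a_4\le4\cdot47=188$.

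It then remains to assemble the cases. If $n\le46$, then either $n<a_4$, which forces the last term of $F$ to vanish and reduces the claim to the statement that $P_3+P_4+6P_4$ represents $n$, true since $n\le46<47$; or $n\ge a_4$, in which case taking $P_{m_4}(x_4)=1$ gives $m=n-a_4\in[0,46]$, again represented by $P_3+P_4+6P_4$. If $n\ge47$ and $n\equiv0,1\pmod5$, take $x_4=0$ and apply Lemma~\ref{thm::oh}. If $n\ge47$ and $n\equiv2,3,4\pmod5$, choose $x_4\in\{1,2\}$ with $m=n-a_4P_{m_4}(x_4)\equiv0,1\pmod5$ as furnished by the congruence computation; then $m\ge0$ as soon as $n\ge4a_4$ (indeed $n\ge3a_4$ suffices when $m_4=3$), and Lemma~\ref{thm::oh} finishes these $n$. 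The pairs $(a_4,n)$ left untreated therefore satisfy $a_4\le47$, $n\equiv2,3,4\pmod5$, and $47\le n<4a_4\le188$, a finite set, and for each of them I would exhibit a representation of $n$ by $F$ directly. I expect this last, finite verification to be the only genuine work: it is small enough for a short search, and its size is cut down further by noting that the window $a_4\le n\le a_4+46$ is already covered by $P_{m_4}(x_4)=1$ and that only the residues actually needing $P_{m_4}(x_4)\in\{3,4\}$, together with the (often empty) window $a_4+47\le n<4a_4$, require attention.
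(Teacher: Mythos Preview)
Your proposal is correct and follows essentially the same approach as the paper. The paper's proof is the single sentence ``This is straightforward following Proposition~\ref{thm::oh},'' and what you have written is exactly the unpacking of that sentence: use Lemma~\ref{thm::oh} to cover $n\equiv0,1\pmod5$, use the truant value $47$ to cover $n\le46$, and otherwise subtract a suitable $a_4P_{m_4}(x_4)$ with $x_4\in\{1,2\}$ to land in one of those two cases, the congruence hypothesis on $a_4$ being precisely what makes the residue shifting possible. Your identification of the residual finite window $a_4+47\le n<4a_4$ (resp.\ $<3a_4$ when $m_4=3$) for certain residues is accurate and is implicitly subsumed under ``straightforward'' in the paper; a short direct search disposes of it.
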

\begin{proof}
This is straightforward following Proposition \ref{thm::oh}.
\end{proof}

\section{Nodes of Depth $r=4$: Analytic Methods}
\label{sec::analyticmethods}

For the remaining nodes of depth $r=4$, we apply analytic methods to study the representations. Because any remaining node $F=a_1P_3+\cdots+a_{r_1}P_3+b_1P_4+\cdots+b_{r_2}P_4$ such that $r_1+r_2=r$ contains at least a multiple of an triangular number $P_3$, we can associate to it the quadratic form $Q$ with congruence conditions as follows,
$$
Q(x_1,\ldots,x_{r_1},y_1,\ldots,y_{r_2})\coloneqq a_1(2x_1+1)^2+\cdots+a_{r_1}(2x_{r_1}+1)^2+2b_1(2y_1)^2+\cdots+2b_{r_2}(2y_{r_2})^2,
$$
with integers $\mu\coloneqq8$ and $\rho\coloneqq a_1+\cdots+a_{r_1}$, constructed in Section \ref{sec::arithmeticmethods}. So we have the relation
$$
r_F(n)=r_Q(\mu n+\rho),
$$ 
for any integer $n\in\ZZ$.

Let $A$ be the diagonal matrix with entries $2a_1,\ldots,2a_{r_1},4b_1,\ldots,4b_{r_2}$, which is the Hessian matrix of the quadratic form $Q$ with congruence conditions. The \textit{discriminant} $D$ of $Q$ is defined as the determinant of the matrix $A$. The \textit{level} $N$ of $Q$ is defined as the least positive integer $N$ such that diagonal entries of the matrix $NA^{-1}$ are even and off-diagonal entries of the matrix $NA^{-1}$ are integral.

Let $\HH$ be the complex upper half-plane. The \textit{theta series} $\Theta_Q\colon\HH\to\CC$ associated to the quadratic form $Q(x_1,\ldots,x_{r_1},y_1,\ldots,y_{r_2})$ with congruence conditions is defined as
$$
\Theta_Q(\tau)\coloneqq\sum_{x_i,y_j\in\ZZ}e^{2\pi i\tau Q\left(x_1,\ldots,x_{r_1},y_1,\ldots,y_{r_2}\right)}=\sum_{n\geq0}r_Q(n)e^{2\pi in\tau},
$$
for any complex number $\tau\in\HH$. By \cite[Proposition 2.1]{shimura1973modular}, the theta series $\Theta_Q$ is a modular form of weight $2$ for the congruence subgroup $\Gamma_0(4N)$ with the Nebentypus $\chi_D$, where the character $\chi_D$ is given by the Kronecker-Jacobi symbol defined in \cite[p. 442]{shimura1973modular}. By the theory of modular forms, it splits into a sum of an Eisenstein series $E_Q$ and a cusp form $G_Q$. Let $a_{E_Q}(n)$ and $a_{G_Q}(n)$ be the $n$-th Fourier coefficients of $E_Q$ and $G_Q$, respectively. Clearly we have
$$
r_Q(n)=a_{E_Q}(n)+a_{G_Q}(n),
$$
for any integer $n\in\ZZ$. To prove that $Q$ represents the integer $\mu n+\rho$, a typical approach is to bound the Fourier coefficient $a_{E_Q}(\mu n+\rho)$ from below and bound the Fourier coefficient $a_{G_Q}(n)$ from above for any integer $n\geq0$. We give the bounds in Proposition \ref{thm::eisenstein4} and Proposition \ref{thm::cuspidal4}.

\begin{proposition}
\label{thm::eisenstein4}
Suppose that $F=\sum_{i=1}^{4}a_iP_{m_i}$ is a sum of triangular numbers and squares such that $m_i=3$ for at least one $1\leq i\leq 4$. Let $Q$ be the corresponding quadratic form with congruence conditions of discriminant $D$ and level $N$, together with integers $\mu$ and $\rho$ constructed in Section \ref{sec::arithmeticmethods}. If $Q$ represents any integer $\mu n+\rho$ over $\ZZ_p$ for any prime number $p$, then there exists a constant $C_E>0$ such that
$$
a_{E_Q}(\mu n+\rho)\geq C_E\cdot\sigma_{\chi_D}(\mu n+\rho)\cdot(\mu n+\rho),
$$
for any integer $n\geq0$ such that $\ord_p(\mu n+\rho)\leq1$ for any odd prime number $p\mid N$, where $\sigma_{\chi}(n)$ is a twisted divisor function defined as,
$$
\sigma_{\chi}(n)\coloneqq\sum_{d\mid n}\frac{\chi(d)}{d},
$$
for a Dirichlet character $\chi$ modulo $N$. Moreover, the constant $C_E$ has an explicit expression given as follows,
$$
C_E\coloneqq\frac{\pi^2}{L(2,\chi_D)\sqrt{D}}\prod_{p\mid N}\left(\frac{b_p}{1-\chi_D(p)p^{-2}}\right),
$$
with a positive rational number $b_p>0$ depending only on $Q$, $\mu$, and $\rho$ for any prime number $p\mid N$.
\end{proposition}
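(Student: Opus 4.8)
The plan is to identify $E_Q$ with the genus theta series of $Q$ via the Siegel--Weil formula and then evaluate the resulting product of local densities. The congruence conditions only restrict the sum defining $\Theta_Q$ to a fixed coset $h+2\ZZ^{r}$ (with $r=4$) of the sublattice $2\ZZ^{r}$ inside the $\ZZ$-lattice carrying the diagonal form $\sum_{i\le r_1}a_i u_i^2+\sum_{j}2b_j u_j^2$ (after the substitution $u_i=2x_i+1$, $u_j=2y_j$), so $\Theta_Q$ is a shifted lattice theta series and the Siegel--Weil identity expresses its Eisenstein part $E_Q$ as the mass-weighted average of the theta series over the genus. Consequently, for every positive integer $m$ there is a product formula
\[
a_{E_Q}(m)=\beta_\infty(m)\prod_{p}\beta_p(m),
\]
where $\beta_p(m)$ is the local density of representations of $m$ by the coset $h+2\ZZ^{r}$ over $\ZZ_p$ (for odd $p$ this coincides with the local density of the ambient lattice, since $2$ is a unit), and $\beta_\infty(m)$ is the real density. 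A direct computation of the real density for a positive definite quaternary form, using that the Gram determinant of the coset lattice equals $2^{r}D=16D$, gives $\beta_\infty(m)=\frac{\pi^2}{\sqrt D}\,m$, which already accounts for the factors $\pi^2$ and $\sqrt D$ in $C_E$.

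The next step is to split off the good primes. For an odd prime $p\nmid N$ the form $Q$ is $\ZZ_p$-unimodular, and the classical formula for local densities of unimodular quaternary forms gives, with $p\nmid u$,
\[
\beta_p(p^{v}u)=\bigl(1-\chi_D(p)p^{-2}\bigr)\sum_{j=0}^{v}\chi_D(p)^{j}p^{-j}.
\]
Writing $m'$ for the prime-to-$N$ part of $m$ and using that $\chi_D$ is completely multiplicative,
\[
\prod_{p\nmid N}\beta_p(m)=\Bigl(\prod_{p\nmid N}(1-\chi_D(p)p^{-2})\Bigr)\,\sigma_{\chi_D}(m')=\frac{1}{L(2,\chi_D)}\Bigl(\prod_{p\mid N}\frac{1}{1-\chi_D(p)p^{-2}}\Bigr)\sigma_{\chi_D}(m').
\]
Combining this with the previous display yields
\[
a_{E_Q}(m)=\frac{\pi^2\,m\,\sigma_{\chi_D}(m')}{\sqrt D\,L(2,\chi_D)}\prod_{p\mid N}\frac{\beta_p(m)}{1-\chi_D(p)p^{-2}}.
\]

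It remains to bound the finitely many bad-prime factors from below, uniformly in $n$, and to pass from $\sigma_{\chi_D}(m')$ to $\sigma_{\chi_D}(m)$ at the cost of a bounded factor. For an odd prime $p\mid N$, the hypothesis $\ord_p(m)\le1$ means that $\beta_p(m)$ is determined by $\ord_p(m)\in\{0,1\}$ and the square class of the unit part of $m$, hence takes only finitely many values, all positive because $m$ is represented by $Q$ over $\ZZ_p$; thus $\beta_p(m)\ge b_p'>0$ for a constant $b_p'$ depending only on $Q$. For $p=2$, although $\ord_2(m)=\ord_2(\mu n+\rho)$ need not be bounded, the $2$-adic density $\beta_2$ is governed along the powers of $2$ by a linear recursion and is bounded below whenever it is nonzero, so again $\beta_2(m)\ge b_2'>0$. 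Finally, $\sigma_{\chi_D}(m)/\sigma_{\chi_D}(m')=\prod_{p\mid N,\,p\mid m}\sum_{j=0}^{\ord_p m}\chi_D(p)^{j}p^{-j}$ lies between two positive constants depending only on $Q$, $\mu$, $\rho$ under the same valuation constraints. Absorbing $b_p'$ and this ratio into a single positive rational $b_p$ for each $p\mid N$ gives
\[
a_{E_Q}(\mu n+\rho)\ge\frac{\pi^2}{\sqrt D\,L(2,\chi_D)}\Bigl(\prod_{p\mid N}\frac{b_p}{1-\chi_D(p)p^{-2}}\Bigr)\sigma_{\chi_D}(\mu n+\rho)\,(\mu n+\rho),
\]
which is the claim with the stated $C_E$.

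The hardest part is the bad-prime analysis: proving a uniform positive lower bound for $\beta_p(m)$ at the ramified odd primes is exactly what forces the hypothesis $\ord_p(\mu n+\rho)\le 1$ (dropping it, $\beta_p(m)$ can decay roughly like $p^{-\ord_p(m)}$ and the inequality fails), and the prime $2$ requires the recursive/stabilization behaviour of $2$-adic local densities together with the local solvability assumption. A more routine but delicate point is to match Siegel's normalization precisely, so that the leading term is literally $\sigma_{\chi_D}(\mu n+\rho)(\mu n+\rho)$ rather than a truncation or a conjugate, and to check that the congruence conditions are correctly incorporated both into the $2$-adic local data and into the computation of $\beta_\infty$ that produces the clean constant $\pi^2/\sqrt D$.
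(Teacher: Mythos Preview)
Your approach is the same as the paper's: express $a_{E_Q}$ via the Siegel--Weil/Siegel--Minkowski product of local densities, evaluate the Euler factor exactly at primes $p\nmid N$ to produce $L(2,\chi_D)^{-1}\sigma_{\chi_D}$, and bound the finitely many bad factors from below. The treatment of odd $p\mid N$ and the passage from $\sigma_{\chi_D}(m')$ to $\sigma_{\chi_D}(m)$ are fine.

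The genuine gap is at $p=2$. You assert that ``the $2$-adic density $\beta_2$ is governed along the powers of $2$ by a linear recursion and is bounded below whenever it is nonzero,'' but this is neither proved nor true in general: a reduction-type recursion for $\beta_2(2^vu)$ can perfectly well drive the density to $0$ as $v\to\infty$ (this is exactly what happens for diagonal quaternary forms of squares at anisotropic primes). More to the point, you never invoke the hypothesis that $m_i=3$ for some $i$, which is the one ingredient in the statement you have not used. The paper uses it as follows: after a change of variables the local density at $2$ becomes an integral $I_2(2n;\phi_Q)$ with $\phi_Q=\sum 2a_iP_{m_i}$; the presence of a triangular summand forces the invariant $\mathfrak{t}_d$ in the explicit $2$-adic formula of Kane--Yang to be finite, so $I_2(2n;\phi_Q)$ depends only on $n$ modulo a fixed power of $2$. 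Hence $\beta_2(\mu n+\rho)$ takes only finitely many values, all positive by local representability, and the uniform constant $b_2$ follows. Concretely, a term $a_i(2x_i+1)^2$ has $x_i$-derivative $4a_i(2x_i+1)$, a $2$-adic unit times $4a_i$, so Hensel lifting stabilizes at a bounded level; without any triangular term this fails and no uniform $b_2$ exists. You should replace the recursion remark with this argument (or an equivalent Hensel/stabilization statement) and make explicit where $m_i=3$ enters.
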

\begin{proof}
We use the Siegel--Minkowski formula to rewrite the Fourier coefficient $a_{E_Q}(n)$ in terms of a product of local densities. We use the formulation of the Siegel--Minkowski formula given in \cite[(1.15)]{shimura2004inhomogeneous}. Then we have
\begin{equation}
\label{eqn::siegelminkowski}
a_{E_Q}(n)=\frac{4\pi^2n}{\sqrt{D}}\prod_{p}\beta_p(n;Q),
\end{equation}
for any integer $n\geq0$, where if $Q$ is a quadratic form with congruence conditions of the form
$$
Q(x_1,x_2,x_3,x_4)=\sum_{1\leq i,j\leq4}a_{i,j}(\mu_ix_i+\rho_i)(\mu_jx_j+\rho_j),
$$
with integers $a_{i,j},\mu_i,\rho_i\in\ZZ$ for any $1\leq i,j\leq4$, then the local density $\beta_p(n;Q)$ is given by
$$
\beta_p(n;Q)=\int_{\QQ_p}\int_{\prod_{i=1}^4\left(\rho_i+\mu_i\ZZ_p\right)}e_p\Bigg(\sigma\Bigg(\sum_{1\leq i,j\leq4}a_{i,j}x_ix_j-n\Bigg)\Bigg)\mathrm{d}x_1\cdots\mathrm{d}x_4\mathrm{d}\sigma,
$$
for any prime number $p$ and any $p$-adic integer $n\in\ZZ_p$, where the Haar measure on the field $\QQ_p$ is normalized so that the subset $\ZZ_p$ is of measure $1$ and the function $e_p\coloneqq\QQ_p\to\CC$ is defined by $e_p(\alpha)\coloneqq e^{-2\pi ia}$ for some rational number $a\in\bigcup_{t=1}^{\infty}p^{-t}\ZZ$ such that $\alpha-a\in\ZZ_p$. 

To prove the lower bound, we evaluate the local densities $\beta_p(\mu n+\rho;Q)$ for any prime number $p$. Plugging in the quadratic form $Q$ with congruence conditions corresponding to the sum $F=\sum_{i=1}^{4}a_iP_{m_i}$ and changing variables, we see that
$$
\beta_p(\mu n+\rho;Q)=p^{-\ord_p(4)}I_p(2n;\phi_Q),
$$
where $\phi_Q(x_1,\ldots,x_4)=\sum_{i=1}^{4}2a_iP_{m_i}(x_i)$ and $I_p(n;\phi)$ is defined as
$$ 
I_p(n;\phi)=\int_{\QQ_p}\int_{\ZZ_p^4}e_p\big(\sigma\left(\phi(x_1,\ldots,x_4)-n\right)\big)\mathrm{d}x_1\cdots\mathrm{d}x_4\mathrm{d}\sigma,
$$
for any prime number $p$, any $p$-adic integer $n\in\ZZ_p$, and any polynomial $\phi\in\ZZ_p[x_1,\ldots,x_4]$. Then we use the explicit formulae given in \cite[Theorem 4.2 and Theorem 4.6]{kane2023finiteness} to bound the integral $I_p(2n;\phi_Q)$ for any prime number $p$.

If $p$ is a prime number such that $p\nmid N$, we can replace the polynomial $\phi_Q$ by a unimodular quaternary diagonal quadratic form with discriminant $D$ by changing variables in the integral $I_p$. Therefore, applying \cite[Theorem 4.2]{kane2023finiteness}, we have
$$
\beta_p(\mu n+\rho;Q)=I_p(2n;\phi_Q)=\left(1-\frac{\chi_D(p)}{p^2}\right)\sum_{k=0}^{t}\frac{\chi_D(p)^k}{p^k},
$$
where we set $t\coloneqq\ord_p(\mu n+\rho)$. If $p$ is an odd prime number such that $p\mid N$, with the assumption $\ord_p(\mu n+\rho)\leq1$, the integral $I_p(2n;\phi_Q)$ is determined by the congruence class of the integer $n$ modulo $p$ by \cite[Theorem 4.2]{kane2023finiteness}. Since $Q$ represents any integer $\mu n+\rho$ over $\ZZ_p$, there exists a positive rational number $b_p>0$ such that
$$
\beta_p(\mu n+\rho;Q)\geq b_p,
$$
for any integer $n\geq0$ such that $\ord_p(\mu n+\rho)\leq1$. If $p=2$, we see that $\mathfrak{t}_d<\infty$ in \cite[Theorem 4.6]{kane2023finiteness} because $m_i=3$ for at least one $1\leq i\leq 4$. Therefore the integral $I_2(2n;\phi_Q)$ is determined by the congruence class of the integer $n$ modulo $2^{\mathfrak{t}_d-1}$. Since $Q$ represents any integer $\mu n+\rho$ over $\ZZ_2$, there exists a positive rational number $b_2>0$ such that
$$
\beta_2(\mu n+\rho;Q)\geq b_2,
$$
for any integer $n\geq0$. Plugging these bounds into the Siegel--Minkowski formula (\ref{eqn::siegelminkowski}), we obtain the desired lower bound for the Eisenstein part.
\end{proof}

\begin{proposition}
\label{thm::cuspidal4}
Suppose that $F=\sum_{i=1}^{4}a_iP_{m_i}$ is a sum of triangular numbers and squares. Let $Q$ be the corresponding quadratic form with congruence conditions of level $N$, constructed in Section \ref{sec::arithmeticmethods}. Then there exists a constant $C_G>0$ such that for any integer $n\geq0$, we have
$$
|a_{G_Q}(n)|\leq C_G\cdot\sigma_0(n)\cdot n^{\frac{1}{2}},
$$
where $\sigma_0(n)$ is the number of divisors of an integer $n$. Moreover, the constant $C_G$ is given by
$$
C_G\coloneqq\sum_{i\in I}\sum_{\sigma\colon K_i\to\CC}\frac{|\sigma(\gamma_i)|}{\sqrt{d_i}},
$$
if we have
\begin{equation}
\label{eqn::newformsplitting}
G_Q(\tau)=\sum_{i\in I}\sum_{\sigma\colon K_i\to\CC}\sigma(\gamma_i)g_i|_{V(d_i)}^{\sigma}(\tau),
\end{equation}
where $\gamma_i\in\CC$ is a complex number, $d_i$ is an integer, and $g_i$ is a newform of weight $2$ and level $N_i$ such that $N_id_i\mid4N$ for each $i\in I$, and each inner sum runs over the set of embeddings of the base field $K_i$ of the newform $g_i$ into the field $\CC$ of complex numbers.
\end{proposition}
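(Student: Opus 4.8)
The plan is to estimate the Fourier coefficients of the cusp form $G_Q$ by expanding it in a basis adapted to newforms and then invoking Deligne's bound. First I would recall that, by the result of Shimura cited above, $\Theta_Q$ is a modular form in $M_2(\Gamma_0(4N),\chi_D)$, so its cuspidal component $G_Q$ lies in $S_2(\Gamma_0(4N),\chi_D)$. By the theory of newforms of Atkin--Lehner and Li, this space decomposes as
$$
S_2(\Gamma_0(4N),\chi_D)=\bigoplus_{M}\,\bigoplus_{d\mid 4N/M}V(d)\,S_2^{\mathrm{new}}(\Gamma_0(M),\chi_D),
$$
where $M$ runs over the multiples of $\mathrm{cond}(\chi_D)$ dividing $4N$, the character $\chi_D$ on the right is understood modulo $M$, and $V(d)$ denotes the shift operator sending $\sum_n a_nq^n$ to $\sum_n a_nq^{nd}$. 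Each newform space has a basis of normalized Hecke eigenforms, and with each newform its full Galois orbit over $\QQ$ occurs; expanding $G_Q$ in this basis and collecting Galois-conjugate terms yields an identity of the shape (\ref{eqn::newformsplitting}), where $g_i$ is a newform of some level $N_i\mid 4N$, the shift parameter $d_i$ satisfies $N_id_i\mid 4N$, and $\gamma_i$ lies in the coefficient field $K_i$ of $g_i$. Since $G_Q$ is determined by finitely many of its Fourier coefficients via the Sturm bound, the numbers $\gamma_i$, and hence the constant $C_G$, are explicitly computable.

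Next I would bound the contribution of a single summand $g_i|_{V(d_i)}^{\sigma}$. For an embedding $\sigma\colon K_i\to\CC$, applying $\sigma$ coefficientwise to $g_i$ produces a normalized newform $g_i^{\sigma}$ of the same weight and level, so the $n$-th Fourier coefficient of $g_i^{\sigma}|_{V(d_i)}$ equals $\sigma(a_{g_i}(n/d_i))$ when $d_i\mid n$ and vanishes otherwise. Deligne's bound for the Hecke eigenvalues of holomorphic newforms, that is, the Ramanujan--Petersson bound in weight $2$, together with the multiplicativity of the Hecke eigenvalues, gives $|\sigma(a_{g_i}(m))|\leq\sigma_0(m)\,m^{1/2}$ for every $m\geq1$. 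Combining this with the elementary inequalities $\sigma_0(n/d_i)\leq\sigma_0(n)$ and $(n/d_i)^{1/2}=n^{1/2}d_i^{-1/2}$, valid whenever $d_i\mid n$, I obtain
$$
\bigl|a_{g_i|_{V(d_i)}^{\sigma}}(n)\bigr|\leq\frac{\sigma_0(n)\,n^{1/2}}{\sqrt{d_i}}
$$
for every $n\geq0$, the bound being trivially true when $d_i\nmid n$.

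Finally, taking the $n$-th Fourier coefficient of (\ref{eqn::newformsplitting}) and applying the triangle inequality yields
$$
|a_{G_Q}(n)|\leq\sum_{i\in I}\sum_{\sigma\colon K_i\to\CC}|\sigma(\gamma_i)|\cdot\frac{\sigma_0(n)\,n^{1/2}}{\sqrt{d_i}}=C_G\cdot\sigma_0(n)\cdot n^{1/2},
$$
with $C_G$ exactly as in the statement. The only deep input is Deligne's bound, which I would cite rather than prove. I expect the principal obstacle to be organizational rather than substantive: making the decomposition (\ref{eqn::newformsplitting}) completely precise when $\chi_D$ is imprimitive modulo $4N$, so that oldforms can be induced from several intermediate levels $M$, and verifying in each case that $N_id_i\mid 4N$, together with the bookkeeping required to compute the $\gamma_i$ effectively through the Sturm bound. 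Once the newform decomposition is in hand, the analytic estimate itself is routine.
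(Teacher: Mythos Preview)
Your proposal is correct and follows essentially the same approach as the paper: the paper's proof is a single sentence citing Deligne's optimal bound for Fourier coefficients of newforms, and your argument simply spells out the standard details (Atkin--Lehner--Li decomposition, Deligne's bound applied termwise, triangle inequality) that this citation is meant to encapsulate.
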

\begin{proof}
This follows from Deligne's optimal bound for the Fourier coefficients of newforms \cite{deligne1974conjecture}.
\end{proof}

From the expressions of the lower bound and the upper bound, we have to bound the quotient of divisor functions to compare the Eisenstein part with the cuspidal part.

\begin{proposition}
\label{thm::quotientdivisor4}
Let $\chi$ be a Dirichlet character. For any real number $\varepsilon>0$, we have
$$
\frac{\sigma_{\chi}(n)}{\sigma_0(n)}\geq C_{\varepsilon}n^{-\varepsilon},
$$
for some constant $C_{\varepsilon}>0$.
\end{proposition}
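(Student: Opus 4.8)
The plan is to reduce the estimate to a one-prime-at-a-time computation using multiplicativity. Since $\chi$ and $d\mapsto 1/d$ are completely multiplicative, $g(d)\coloneqq\chi(d)/d$ is multiplicative, hence $\sigma_\chi(n)=\sum_{d\mid n}g(d)$ is multiplicative, and $\sigma_0$ is multiplicative as well; therefore $f(n)\coloneqq\sigma_\chi(n)/\sigma_0(n)$ is multiplicative, with Euler factors
\[
f(p^a)=\frac{1}{a+1}\sum_{k=0}^{a}\frac{\chi(p)^k}{p^k}.
\]
(When $\chi$ is not real, one runs the argument below with $\lvert\sigma_\chi(n)\rvert$ in place of $\sigma_\chi(n)$; in the intended application $\chi=\chi_D$ is real, and the local bound below forces $\sigma_\chi(n)>0$, so the stated inequality holds literally.) It thus suffices to bound $f(p^a)$ from below uniformly and then recombine the Euler product.

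For the local bound, if $\chi(p)\in\{0,1\}$ then the inner sum consists of nonnegative terms with leading term $1$, so it is $\geq 1$; if $\chi(p)=-1$ then, summing the geometric series, it equals $\bigl(1-(-1/p)^{a+1}\bigr)/(1+1/p)\geq(1-1/p)/(1+1/p)=(p-1)/(p+1)\geq 1/3$, the last inequality being an equality at $p=2$. Hence in every case
\[
f(p^a)\geq\frac{1}{3(a+1)}.
\]

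Now fix $\varepsilon>0$ and set $p_\varepsilon\coloneqq 6^{1/\varepsilon}$. If $p\geq p_\varepsilon$ and $a\geq 1$, then $\varepsilon a\log p\geq a\log 6\geq\log(6a)\geq\log\bigl(3(a+1)\bigr)$, so $\log\bigl(1/f(p^a)\bigr)\leq\log\bigl(3(a+1)\bigr)\leq\varepsilon a\log p$; the inequality $\log(1/f(p^a))\leq\varepsilon a\log p$ is trivial when $a=0$ since $f(1)=1$. For each of the finitely many primes $p<p_\varepsilon$, the expression $\log\bigl(3(a+1)\bigr)-\varepsilon a\log p$ tends to $-\infty$ as $a\to\infty$, so $M_p(\varepsilon)\coloneqq\sup_{a\geq 0}\bigl(\log(1/f(p^a))-\varepsilon a\log p\bigr)$ is finite; put $M(\varepsilon)\coloneqq\sum_{p<p_\varepsilon}M_p(\varepsilon)<\infty$. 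Writing $n=\prod_p p^{a_p}$ and summing the local estimates over all primes,
\[
\log\frac{1}{f(n)}=\sum_p\log\frac{1}{f(p^{a_p})}\leq\sum_{p\geq p_\varepsilon}\varepsilon a_p\log p+\sum_{p<p_\varepsilon}\bigl(\varepsilon a_p\log p+M_p(\varepsilon)\bigr)\leq\varepsilon\log n+M(\varepsilon),
\]
whence $f(n)\geq e^{-M(\varepsilon)}n^{-\varepsilon}$, so the constant $C_\varepsilon\coloneqq e^{-M(\varepsilon)}$ works (and is effectively computable).

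Every step is elementary, so there is no genuine obstacle; the only point that deserves a moment's care is that the factors $1/(a+1)$ in $f$, which could conceivably pile up for highly composite $n$, are negligible in the exponent compared with the contributions $a\log p$ that the same prime powers make to $\log n$ — once the finitely many primes below the threshold $p_\varepsilon$ have been isolated and absorbed into $M(\varepsilon)$.
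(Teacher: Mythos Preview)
Your proof is correct and follows essentially the same approach as the paper's: both use multiplicativity (the paper calls this ``the trick of Ramanujan'') to reduce to prime-power factors, observe that the factor at $p^a$ is $\geq p^{-\varepsilon a}$ once $p$ exceeds a threshold depending on $\varepsilon$, and absorb the finitely many small primes into the constant. Your write-up is more explicit than the paper's sketch --- you supply the concrete local bound $f(p^a)\geq 1/(3(a+1))$ and the threshold $p_\varepsilon=6^{1/\varepsilon}$ --- and you rightly flag that the case analysis requires $\chi$ real, which holds in the intended application ($\chi=\chi_D$) and which the paper's proof also tacitly assumes.
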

\begin{proof}
We use the trick of Ramanujan to bound the quotient of the divisor functions. Note that
\begin{equation}
\label{eqn::ratioinn}
\frac{n^{\varepsilon}\sigma_{\chi}(n)}{\sigma_0(n)}
\end{equation}
is multiplicative for any real number $\varepsilon>0$. So it suffices to bound
\begin{equation}
\label{eqn::ratioinpt}
\frac{p^{\varepsilon t}\sigma_{\chi}(p^t)}{\sigma_0(p^t)},
\end{equation}
for any prime number $p$ and any integer $t\geq1$. Because the numerator in the ratio (\ref{eqn::ratioinn}) dominates the denominator as the integer $n$ is sufficiently large, it suffices to bound for finitely many prime numbers $p$, which depends on the fixed real number $\epsilon>0$. For each of these prime numbers $p$, the ratio (\ref{eqn::ratioinpt}) is an increasing function in the variable $t$ as $t$ is sufficiently large. Therefore, we can bound the ratio (\ref{eqn::ratioinn}) from below by a constant $C_{\varepsilon}>0$.
\end{proof}

For each of the remaining nodes, it is easy to verify that the assumptions of Proposition \ref{thm::eisenstein4} holds and it follows that the constant $C_E$ is always positive. Thus, combining Proposition \ref{thm::eisenstein4}, Proposition \ref{thm::cuspidal4}, and Proposition \ref{thm::quotientdivisor4}, the quadratic form $Q$ with congruence conditions represents any sufficiently large positive integer of the form $\mu n+\rho$ such that $\ord_p(\mu n+\rho)\leq1$ for any odd prime number $p\mid N$. Then it remains to check the representability of finitely many positive integers, which can be done by a computer. With this method, we study the remaining nodes of depth $r=4$ in the next proposition.

\begin{proposition}
\label{thm::hardsubtrees}
Every child of the non-underlined nodes in Table \ref{tbl::truant3} is universal except the nodes $P_3+P_4+7P_3+7P_3$, $P_3+P_4+7P_3+14P_3$, $P_3+P_4+7P_3+14P_4$, $P_3+P_4+7P_4+14P_3$, $P_3+P_4+7P_4+7P_4$, $P_3+P_4+7P_4+14P_4$, $P_4+2P_4+5P_3+10P_3$, $P_4+2P_4+5P_3+8P_4$, and $P_4+2P_4+5P_3+10P_4$. For these non-universal children, we have
\begin{enumerate}
\item $P_3+P_4+7P_3+7P_3(\ZZ^4)\supseteq\{n\in\NN~|~8n+15\neq343\cdot49^a\text{ with }a\in\NN\}$.
\item $P_3+P_4+7P_3+14P_3(\ZZ^4)\supseteq\{n\in\NN~|~8n+22\neq294\cdot49^a,686\cdot49^a\text{ with }a\in\NN\}$.
\item $P_3+P_4+7P_3+14P_4(\ZZ^4)\supseteq\{n\in\NN~|~8n+8\neq280\cdot49^a\text{ with }a\in\NN\}$.
\item $P_3+P_4+7P_4+14P_3(\ZZ^4)\supseteq\{n\in\NN~|~8n+15\neq343\cdot49^a\text{ with }a\in\NN\}$.
\item $P_3+P_4+7P_4+7P_4(\ZZ^4)\supseteq\{n\in\NN~|~8n+1\neq217\cdot49^a,385\cdot49^a\text{ with }a\in\NN\}$.
\item $P_3+P_4+7P_4+14P_4(\ZZ^4)\supseteq\{n\in\NN~|~8n+1\neq329\cdot49^a\text{ with }a\in\NN\}$.
\item $P_4+2P_4+5P_3+10P_3(\ZZ^4)\supseteq\{n\in\NN~|~8n+15\neq175\cdot25^a\text{ with }a\in\NN\}$.
\item $P_4+2P_4+5P_3+8P_4(\ZZ^4)=\{n\in\NN~|~n\neq28\}$.
\item $P_4+2P_4+5P_3+10P_4(\ZZ^4)=\{n\in\NN~|~n\neq20\}$.
\end{enumerate}
\end{proposition}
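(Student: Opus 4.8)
For each child $F=\sum_{i=1}^{4}a_iP_{m_i}$ of a non-underlined node of Table~\ref{tbl::truant3} not already handled by Proposition~\ref{thm::partofsubtree2-3}, the plan is to run the analytic comparison assembled in this section. First I would attach to $F$ the quadratic form $Q$ with congruence conditions, of discriminant $D$, level $N$, and nebentypus $\chi_D$, with $\mu=8$ and $\rho$ the sum of the coefficients $a_i$ with $m_i=3$, and split $\Theta_Q=E_Q+G_Q$. By Proposition~\ref{thm::eisenstein4} (its hypotheses being checked as in the paragraph preceding this proposition, so that $C_E>0$) one has $a_{E_Q}(\mu n+\rho)\geq C_E\,\sigma_{\chi_D}(\mu n+\rho)(\mu n+\rho)$ whenever $\ord_p(\mu n+\rho)\leq1$ for every odd prime $p\mid N$; by Proposition~\ref{thm::cuspidal4}, computing the newform decomposition~\eqref{eqn::newformsplitting} of $G_Q$ gives $|a_{G_Q}(\mu n+\rho)|\leq C_G\,\sigma_0(\mu n+\rho)(\mu n+\rho)^{1/2}$; and by Proposition~\ref{thm::quotientdivisor4} the quotient $\sigma_{\chi_D}/\sigma_0$ is bounded below. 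Combining these, $r_Q(\mu n+\rho)>0$ for all $\mu n+\rho$ above an effective, node-dependent threshold $N_0$ with $\ord_p(\mu n+\rho)\leq1$ for every odd $p\mid N$, so it remains to treat (a) the integers $\mu n+\rho$ divisible by the square of an odd prime $p\mid N$, and (b) the finitely many $\mu n+\rho\leq N_0$ with $\ord_p(\mu n+\rho)\leq1$ for every odd $p\mid N$.

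For (a) I would use the descent already used for Proposition~\ref{thm::easysubtrees}: scaling a representation of $m'$ coordinatewise by an odd prime $p$ produces a representation of $p^2m'$, so by downward induction it suffices to treat $\mu n+\rho$ with $\ord_p(\mu n+\rho)\leq1$ for every odd $p\mid N$, which is covered above or by (b). For (b) I would verify representability of the remaining integers by computer. For every child except the nine in the statement this turns up no omitted value, so the child is universal. For each of the nine exceptional children it produces an unrepresented value $b$ (for instance $b=7$ in items~(1) and~(4) — smaller than $\rho=15$, so the least exceptional $\mu n+\rho$ is $7^3$; $b=280$ in~(3); $b=229$ in~(8); $b=165$ in~(9)), and the represented set is the complement of the integers $\mu n+\rho$ that descend to such a $b$. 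Whether this complement is cofinite or misses an infinite family is governed by whether the $p$-unimodular Jordan block of $Q$ is anisotropic over $\QQ_p$: at $p=7$ in~(1)--(6) and at $p=5$ in~(7) it is anisotropic (the relevant binary blocks), a representation of $p^2m'$ is then forced to be divisible by $p$, the descent is reversible, and exactly the integers $b\cdot p^{2a}$ are omitted, giving the families $\{n\in\NN\mid 8n+\rho\neq b\cdot p^{2a}\}$; at $p=5$ in~(8)--(9) the relevant ternary block is isotropic mod $5$, only $b$ itself is omitted, and the representability of $b\cdot25^{a}$ for $a\geq1$ follows from the uniform lower bound for the local density $\beta_5$ near an isotropic point, which makes the estimate of Proposition~\ref{thm::eisenstein4} survive when $\ord_5(\mu n+\rho)\geq2$.

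The hard part is the bulk and delicacy of the computation, not any new idea. For each remaining child one must determine the level $4N$ (which is large — in the thousands, or tens of thousands for the children containing a coefficient $47$), diagonalize $M_2(\Gamma_0(4N),\chi_D)$, isolate the cuspidal part $G_Q$, and decompose it into newforms — several with irrational Hecke eigenvalues, so that $C_G$ is a genuine sum over Galois conjugates as in~\eqref{eqn::newformsplitting} — assemble $C_E$ from the local densities $b_p$, make $N_0$ small enough for the residual finite check to be feasible, and, for the nine exceptional children, carry out the local analysis at $p=5$ or $p=7$ finely enough that the stated descriptions are sharp. All the analytic input, however, is already furnished by Propositions~\ref{thm::eisenstein4}, \ref{thm::cuspidal4}, and~\ref{thm::quotientdivisor4}.
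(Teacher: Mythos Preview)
Your plan is the paper's: compute $C_E$, $C_G$, $C_\varepsilon$ from Propositions~\ref{thm::eisenstein4}--\ref{thm::quotientdivisor4}, check the finitely many $\mu n+\rho\le N_0$ with $\ord_p(\mu n+\rho)\le 1$ by machine, and cover the remaining $\mu n+\rho$ with $\ord_p\ge 2$ via the scaling $m'\mapsto p^2m'$ (the paper phrases this last step as verifying a handful of seed values such as $5\cdot5^2,5\cdot19^2,\dots$, which is exactly what your descent reduces to once you remember that $m'$ may fall below $\rho$). Two remarks. First, the one substantive thing you omit is the pair of level-halving tricks the paper uses for the children of $P_3+P_4+6P_4$: when $a_4$ is odd and $m_4=3$ one has $2\mid\rho$, so every $d_i$ in~\eqref{eqn::newformsplitting} is even and the newform levels drop from $192a_4$ to $96a_4$; when $a_4$ is odd and $m_4=4$ the congruence conditions on $Q$ are forced by $8n+1\equiv1\pmod 8$ and may be dropped, again halving the level. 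The paper flags the naive level $9024$ as ``extremely slow'' and credits these reductions with making the computation finish at all, so they are not cosmetic. Second, your anisotropy analysis for items~(1)--(7) is correct but supererogatory: the proposition only asserts $\supseteq$ there, and that follows already from scaling plus the finite check; reversibility of the descent would upgrade $\supseteq$ to $=$, which the paper does not claim. For items~(8)--(9) your isotropy argument is a legitimate route to the uniform lower bound on $\beta_5$, though the paper's implicit alternative---simply checking that $229\cdot25$ and $165\cdot25$ are represented and then scaling---is more direct.
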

\begin{proof}
The proofs for these nodes are essentially the same. So we illustrate the method using the node $P_3+P_3+5P_4+19P_3$ and omit the details for other nodes. Let $Q$ be the quadratic form with congruence conditions corresponding to the node $P_3+P_3+5P_4+19P_3$. We have $\mu=8$ and $\rho=21$. By a computer program implemented in SageMath \cite{sagemath}, we find that the constants are given by
$$
C_E\approx0.236,~C_G\approx12.645,~C_{\varepsilon}\approx0.482,
$$
with $\varepsilon=0.25$. Therefore by checking the representability up to $152402970$ with a computer, we see that $Q$ represents every integer of the form $8n+21$ with $n\geq0$ such that $\ord_5(8n+21)\leq1$ and $\ord_{19}(8n+21)\leq1$. It is easy to verify that $Q$ represents the integers $5\cdot5^2,5\cdot19^2,13\cdot5^2$ and $13\cdot19^2$. Hence we can conclude that $Q$ represents every integer of the form $8n+21$ with $n\geq0$, which is equivalent to the universality of the node $P_3+P_3+5P_4+19P_3$.

In principle, the proofs for other nodes are similar. However, naive application of the analytic method to prove the universality of every child of the node $P_3+P_4+6P_4$ requires us to compute newforms of levels up to $9024$ with highly non-trivial characters, which is extremely slow in SageMath. So we apply the following tricks to reduce the level.

Note that the truant of the node $P_3+P_4+6P_4$ is $47$. By Proposition \ref{thm::partofsubtree2-3}, we see that it suffices to prove the universality of the child of the form $P_3+P_4+6P_4+a_4P_{m_4}$ for $7\leq a_4\leq47$ with $a_4\equiv0,2,3\pmod{5}$ when $m_4=3$ and for $6\leq a_4\leq47$ with $a_4\equiv0,1,4\pmod{5}$ when $m_4=4$. 

For any child of the form $P_3+P_4+6P_4+a_4P_3$ with $a_4\equiv1\pmod{2}$, let $Q$ be the quadratic form with congruence conditions corresponding to the node $P_3+P_4+6P_4+a_4P_3$. We have $\mu=8$ and $\rho=1+a_4$ and the level of $Q$ is $48a_4$. So naively we have to compute newforms of levels up to $192a_4$. Since $\rho\equiv0\pmod{2}$, we see that the Fourier coefficient $a_{G_Q}(n)$ are only supported on even integers $n$ and every integer $d_i$ in the splitting (\ref{eqn::newformsplitting}) is divisible by $2$. Hence, it suffices to compute newforms of level up to $96a_4$.

For any child of the form $P_3+P_4+6P_4+a_4P_4$ with $a_4\equiv1\pmod{2}$, let $Q(x,y,z,w)\coloneqq (2x+1)^2+2(2y)^2+12(2z)^2+2a_4(2w)^2$ be the quadratic form with congruence conditions corresponding to the node $P_3+P_4+6P_4+a_4P_4$. We have $\mu=8$ and $\rho=1$. Applying the analytic method naively to study the representations of $8n+1$ by $Q$ requires us to compute newforms of levels up to $192a_4$. Note that the quadratic form $Q(x,y,z,w)$ with congruence conditions represents an integer $8n+1$ if and only if the quadratic form $Q'(x,y,z,w)\coloneqq x^2+2y^2+12z^2+8a_4w^2$ represents the integer $8n+1$. Thus, we can apply the analytic method to the quadratic form $Q'$ without congruence conditions, which only requires us to compute newforms of level up to $96a_4$.

Using these tricks, we can reduce the computational overhead for most of difficult cases. Overall it takes several weeks using paralleled computation in a $4$-processor CPU of $4.20$GHz frequency to verify that every child of the node $P_3+P_4+6P_4$ is universal.
\end{proof}

Till now, we have finished the discussion of the nodes of depth $r=4$. The truants of the non-universal nodes of depth $r=4$ are listed in Table \ref{tbl::truant4}.

{\scriptsize\setlength{\tabcolsep}{1pt}
\begin{longtable}{c|ccccc}
\caption{The truants of the non-universal nodes of depth $r=4$.}
\label{tbl::truant4}\\
\toprule
$F$ & $P_3+P_4+5P_3+10P_3$ & $P_3+P_4+5P_3+10P_4$ & $P_3+P_4+5P_4+5P_4$ & $P_4+2P_4+5P_4+5P_4$ & $P_3+P_4+7P_3+7P_3$\\
\midrule
$t(F)$ & $23$ & $23$ & $18$ & $15$ & $41$ \\
\midrule
$F$ & $P_3+P_4+7P_3+14P_3$ & $P_3+P_4+7P_3+14P_4$ & $P_3+P_4+7P_4+14P_3$ & $P_3+P_4+7P_4+7P_4$ & $P_3+P_4+7P_4+14P_4$ \\
\midrule
$t(F)$ & $34$ & $34$ & $41$ & $27$ & $41$ \\
\midrule
$F$ & $P_4+2P_4+5P_3+10P_3$ & $P_4+2P_4+5P_3+8P_4$ & $P_4+2P_4+5P_3+10P_4$ & $\star$ & $\star$\\
\midrule
$t(F)$ & $20$ & $28$ & $20$ & $\star$ & $\star$ \\
\bottomrule
\end{longtable}}

\section{Nodes of Depth $r\geq5$ and Proofs to the Main Theorems}

Finally, we study nodes of depth $r\geq5$ and then it is straightforward to prove the main theorems.

\begin{proposition}
\label{thm::depth56}
Every node of depth $r\geq5$ is universal, except the nodes $P_3+P_4+7P_4+7P_4+21P_3$ and $P_3+P_4+7P_4+7P_4+21P_4$, both of which represent every positive integer except $48$. Therefore any nodes of depth $r=6$ is universal.
\end{proposition}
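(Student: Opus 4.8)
The plan is to read off the behaviour of the escalator tree at depth $r\geq 5$ from the explicit representation sets already computed in Propositions \ref{thm::easysubtrees} and \ref{thm::hardsubtrees}, so that no further modular-form computation is needed. First, enumerate the nodes of depth $r=5$: by the escalation rules these are exactly the sums $C=F+a_5P_{m_5}$ with $F$ one of the thirteen non-universal nodes of depth $r=4$ in Table \ref{tbl::truant4}, $a_4\leq a_5\leq t(F)$, $m_5\in\{3,4\}$, and $m_4\leq m_5$ when $a_4=a_5$. For such a child one has $C(\ZZ^5)=\bigcup_{k}\bigl(a_5k+F(\ZZ^4)\bigr)$, where $k$ ranges over the value set $\{P_{m_5}(x):x\in\ZZ\}$; here $0$ and $1$ always belong to this set, and every other element is at least $3$. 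Since $a_5\leq t(F)$, taking $k=0$ shows $C$ represents everything that $F$ represents, so $C$ is universal if and only if $C$ represents each positive integer missed by $F$; write $S_F:=\NN\setminus F(\ZZ^4)$ for that set.

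Now split the thirteen nodes by the shape of $S_F$. For the five nodes $P_3+P_4+5P_3+10P_4$, $P_3+P_4+5P_4+5P_4$, $P_4+2P_4+5P_4+5P_4$, $P_4+2P_4+5P_3+8P_4$, and $P_4+2P_4+5P_3+10P_4$, Propositions \ref{thm::easysubtrees} and \ref{thm::hardsubtrees} give $S_F=\{t(F)\}$, and writing $t(F)=a_5\cdot 1+(t(F)-a_5)$ with $0\leq t(F)-a_5<t(F)$ shows every child represents $t(F)$, hence is universal. For the remaining eight nodes — using the sharper bound in the remark after Proposition \ref{thm::easysubtrees} for $P_3+P_4+5P_3+10P_3$ — those propositions show $S_F$ is contained in a sparse set of ``square classes'', namely a union of one or two progressions each growing by a fixed ratio $25$ or $49$, whose least element is $t(F)$. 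For $n_0\in S_F$ and a child $C$ I would try $k=1$: test whether $n_0-a_5\geq 0$ and $n_0-a_5\notin S_F$. A short gap analysis — consecutive elements of each progression differ by a factor exceeding $25$, while $a_5\leq t(F)$ — shows this succeeds for every admissible $a_5$ and every $n_0\in S_F$, with the single exception of $n_0=u_1$, the least element of the second progression (when there is one), together with $a_5=u_1-t(F)$. Among the eight nodes exactly two have a second progression: $P_3+P_4+7P_3+14P_3$, for which $u_1-t(F)=83-34=49$ exceeds its truant $34$ and so is not an admissible value of $a_5$; and $P_3+P_4+7P_4+7P_4$, for which $t(F)=27$, $u_1=48$, and $u_1-t(F)=21\leq t(F)$.

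Thus the only problematic child occurs for $F=P_3+P_4+7P_4+7P_4$ and $a_5=21$; since $21>a_4=7$, this yields the two children $P_3+P_4+7P_4+7P_4+21P_3$ and $P_3+P_4+7P_4+7P_4+21P_4$. For each of them the integer $48$ is unrepresented: $k=0$ would require $F$ to represent $48$, which it does not (a direct finite check), $k=1$ would require $F$ to represent $48-21=27=t(F)$, and the next admissible value of $k$ is at least $3$, giving $48-21k<0$. Every other positive integer is represented — by $k=0$ outside $S_F$, and by $k=1$ on $S_F\setminus\{48\}$, since $27-21=6\notin S_F$ and $n_0-21$ falls in a gap of $S_F$ whenever $n_0\in S_F$ and $n_0\geq 1329$ — so each of these two nodes represents precisely $\NN\setminus\{48\}$, and they are the only non-universal nodes of depth $5$. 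To finish with depth $r\geq 6$: the children of these two nodes $F'$ are the sums $F'+a_6P_{m_6}$ with $21\leq a_6\leq 48$ and $m_6\in\{3,4\}$, and such a sum represents $48$ by taking $P_{m_6}(x_6)=1$ (since $F'$ represents $48-a_6\in\{0,1,\ldots,27\}$) and represents every other positive integer by taking $P_{m_6}(x_6)=0$. Hence every node of depth $6$ is universal, no node of depth $\geq 7$ exists, and among the nodes of depth $\geq 5$ the escalator tree contains exactly these two non-universal nodes.

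The main obstacle will be the bookkeeping in the second paragraph: for each of the eight ``square-class'' nodes one must list the first several excluded values explicitly (for instance $S_{P_3+P_4+7P_4+7P_4}\subseteq\{27,48,1329,2358,\ldots\}$) and verify the gap-and-size estimates that exclude every coincidence $a_5=u_1-t(F)$ other than the one for $P_3+P_4+7P_4+7P_4$ at $a_5=21$. If one prefers not to lean on the precise extent of these excluded sets, each depth-$5$ node can instead be treated directly by the method of Propositions \ref{thm::eisenstein4}, \ref{thm::cuspidal4}, and \ref{thm::quotientdivisor4} together with a finite computer check, exactly as at depth $r=4$; the elementary route above is preferred only because it avoids the newform computations.
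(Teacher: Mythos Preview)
Your proposal is correct and follows essentially the same approach as the paper's own proof: both arguments deduce the depth-$5$ behaviour directly from the representation sets recorded in Propositions~\ref{thm::easysubtrees} and~\ref{thm::hardsubtrees} by testing small values $k\in\{0,1\}$ of $P_{m_5}$, and both isolate the same two exceptional children of $P_3+P_4+7P_4+7P_4$ at $a_5=21$. The paper's proof is terser---it declares the eleven single-progression and singleton cases ``obvious'' and writes out only the two-progression nodes $P_3+P_4+7P_3+14P_3$ and $P_3+P_4+7P_4+7P_4$ explicitly---whereas your gap analysis makes the reasoning uniform; but the underlying idea is identical.
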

\begin{proof}
By Proposition \ref{thm::easysubtrees} and Proposition \ref{thm::hardsubtrees}, it is obvious to see that any child of non-universal nodes other than the nodes $P_3+P_4+7P_3+14P_3$ and $P_3+P_4+7P_4+7P_4$ of depth $r=4$ is universal. Now we prove that any child of the node $P_3+P_4+7P_3+14P_3$ is universal. The truant of the node $P_3+P_4+7P_3+14P_3$ is $34$. For any child of the form $P_3+P_4+7P_3+14P_3+a_5P_{m_5}$ with $14\leq a_5\leq34$ and $3\leq m_5\leq4$, it is equivalent to showing that
$$
Q(x,y,z,w)\coloneqq (2x+1)^2+2(2y)^2+7(2z+1)^2+14(2w+1)^2=8n+22-8a_5P_{m_5}(v),
$$
is solvable with $x,y,z,w,v\in\ZZ$ for any integer $n\geq0$. If $8n+22\neq294\cdot49^a$ or $686\cdot49^a$ with any integer $a\geq0$, then the equation is solvable with $v=0$ by Proposition \ref{thm::hardsubtrees}. If $8n+22=294$ or $686$, it is easy to verify that the equation is solvable. If $8n+22=294\cdot49^a,686\cdot49^a$ with any integer $a\geq1$, then the equation is solvable with $v=1$ because $8n+22-8a_5$ is not divisible by $49$ and it is not equal to $294$ or $686$ either. Therefore, any child of the node $P_3+P_4+7P_3+14P_3$ is universal. In a similar fashion, we can show that any child of the node $P_3+P_4+7P_4+7P_4$ is universal except the nodes $P_3+P_4+7P_4+7P_4+21P_3$ and $P_3+P_4+7P_4+7P_4+21P_4$, both of which represent every positive integer except $48$. Since both of them fail to represent only one positive integer, any nodes of depth $r=6$ are universal.
\end{proof}

\begin{proof}[Proof of Theorem \ref{thm::finitenesstheorem34}]
By the construction of the escalator tree, the theorem follows from Table \ref{tbl::truant2}, Table \ref{tbl::truant3}, Table \ref{tbl::truant4}, and Proposition \ref{thm::depth56}.
\end{proof}

\begin{proof}[Proof of Theorem \ref{thm::criticalinteger}]
We know that any integer $t$ in the set (\ref{eqn::criticalinteger}) is the truant of a sum of triangular numbers and squares in the escalator tree, say $F(x_1,\ldots,x_r)$. Suppose that $G$ is a universal sum, for example, $P_3(y_1)+P_3(y_2)+P_3(y_3)$. Then, we show that the sum
$$
H(x_1,\ldots,x_r,y_1,y_2,y_3,z)\coloneqq F(x_1,\ldots,x_r)+(t+1)G(y_1,y_2,y_3)+(2t+1)P_3(z)
$$
represents every positive integer except $t$. Indeed, we notice that the sum $F$ represents every positive integer up to $t-1$ and the sum $(t+1)G$ represents every positive integer that is divisible by $t+1$. Then it follows that $H$ represents every positive integer $n\not\equiv t\pmod{(t+1)}$. Moreover, setting $z=1$, we see that every positive integer $n\equiv t\pmod{(t+1)}$ such that $n\geq2t+1$ is represented by the sum $H$. Therefore the sum $H$ represents every positive integer except $t$. Hence, for any integer $t$ in the set (\ref{eqn::criticalinteger}), there exists a sum of triangular numbers and squares representing every positive integer except $t$.
\end{proof}

\section*{Acknowledgement}

The author is indebted to his advisor Dr. Ben Kane for many helpful suggestions.

\bibliographystyle{plain}
\bibliography{reference}

\end{document}